\documentclass[reqno, a4paper, 11pt, oneside]{article}
\usepackage[11pt]{extsizes}
\usepackage[a4paper,hmargin=2.5cm,vmargin=2.5cm, bindingoffset=0cm]{geometry}

\usepackage{mathtools}
\usepackage[utf8]{inputenc}
\usepackage[T2A]{fontenc}
\usepackage{amsmath,amssymb,amsthm}
\usepackage[english]{babel}
\usepackage{textcomp}
\usepackage[usenames]{color}
\usepackage{colortbl}
\usepackage{float}
\usepackage{relsize}
\usepackage{amsthm}

\usepackage{thmtools}
\usepackage{thm-restate}

\makeatletter
\newcommand*{\rom}[1]{\expandafter\@slowromancap\romannumeral #1@}
\makeatother

\usepackage[bookmarksopen]{hyperref}
\hypersetup{
	colorlinks=false,
	linkcolor=blue,
	citecolor=magenta,
	urlcolor=cyan,
}
\usepackage{cleveref}

\usepackage{graphicx}
\DeclareGraphicsExtensions{.pdf,.png,.jpg}

\usepackage{amsthm}

\theoremstyle{definition}

\newtheorem*{theorem*}{Theorem}
\newtheorem{theorem}{Theorem}[section]

\newtheorem{lemma}[theorem]{Lemma}
\newtheorem{cor}[theorem]{Corollary}
\newtheorem{Remark}[theorem]{Remark}
\newtheorem{sign}[theorem]{Notation}

\newtheorem{Que}[theorem]{Question}
\newtheorem{proposition}[theorem]{Proposition}
\newtheorem{ex}[theorem]{Example}

\newtheorem{definition}[theorem]{Definition}

\newtheorem*{theorem-non}{Theorem}

\definecolor{applegreen}{rgb}{0.0, 0.42, 0.24}
\definecolor{applegreen}{rgb}{0.55, 0.71, 0.0}

\newcommand{\Fs}{\mathbf{F}}

\newcommand{\tl}{\widetilde}

\newcommand{\Z}{\mathbb{Z}}
\newcommand{\R}{\mathbb{R}}

\renewcommand{\C}{\mathbb{C}}
\newcommand{\Cc}{\mathbb{C} \setminus 0}

\newcommand{\D}{\Delta}

\newcommand{\Ps}{\mathbf P}

\newcommand{\Df}{\mathcal{D}}

\newcommand{\Ds}{\mathbf{\Delta}}

\renewcommand{\O}{\mathbf{O}_S}

\usepackage{abstract}
\DeclareMathOperator{\rk}{rk}

\DeclareMathOperator{\Vol}{Vol}
\DeclareMathOperator{\MV}{MV}
\DeclareMathOperator{\mult}{mult}

\DeclareMathOperator{\ind}{ind}

\DeclareMathOperator{\Conv}{Conv}
\DeclareMathOperator{\Voff}{Voff}
\DeclareMathOperator{\aff}{aff}

\title{Semi-interlaced polytopes}

\author{
Fedor Selyanin \thanks{%\textit{Krichever Center for Advanced Studies}, 
Skolkovo Institute of Science and Technology, Moscow, Russia\\
\textit{Department of Mathematics}, National Research University ``Higher School of Economics'', Moscow, Russia \\
\textit{International Laboratory of Cluster Geometry}, National Research University ``Higher School of Economics'', Moscow, Russia \\
\textit{Email}: Fedor.Selyanin@skoltech.ru}
}
\date{}

\begin{document}

\maketitle

%\textcolor{red}{
%\begin{itemize}
 %   \item Пропустить через deepseek
%\end{itemize}
%}

\begin{abstract}

The Minkowski mixed volume of $n$ subpolytopes $D_1, \dots, D_n$ of a polytope $P \subset {\mathbb R}^n$ clearly does not exceed the normalized volume $n! \Vol(P)$. Equality holds if and only if the subpolytopes are \emph{interlaced}, i.e., each proper face $F \subsetneq P$ intersects at least \mbox{$\dim(F) + 1$} of the polytopes $D_i$. Efficiently computing mixed volumes for more general collections of subpolytopes is crucial for estimating the complexity of numerically solving polynomial systems.

Motivated by relaxing the bound $\dim(F) + 1$ to $\dim(F)$, we prove a combinatorial formula for the mixed volume of a broad class of \emph{semi-interlaced} polytopes. This class includes, in particular, the \emph{off-coordinate polytopes} used in computing algebraic degrees -- such as Maximum Likelihood, Euclidean Distance, and Polar degrees -- via the Kouchnirenko--Bernshtein theory. We also present applications of our results to the \emph{Arnold monotonicity problem} \cite[1982-16]{Arn04}, which concerns the dependence of Milnor numbers on the Newton polyhedra.

\end{abstract}

Keywords. Mixed volume, Newton polytope, algebraic degrees, projective toric variety

2020 Mathematics Subject Classification. 52A39, 14M25

\setcounter{tocdepth}{2}
	\tableofcontents

\section{Introduction}

The mixed volume is the polarization of the volume form; thus, the mixed volume $\MV(P,\dots,P)$ of $n$ copies of the polytope $P \subset \R^n$ equals the normalized volume $n! \Vol(P)$, see \S\ref{Convex-sub-sec}. The mixed volume is monotonic in its arguments, meaning that for any collection of subpolytopes $D_1,\dots ,D_n \subset P$, the inequality $\MV(D_1,\dots, D_n)\le n! \Vol(P)$ holds. A natural question arises: when does equality occur?

The answer is as follows. Assume $\dim(P) = n$, since otherwise the equality holds trivially. Then $\MV(D_1,\dots,D_n) = n! \Vol(P)$ if and only if the polytopes $D_1,\dots,D_n$ are \textbf{interlaced} in $P$, i.e., for any proper face $F \subsetneq P$, at least $\dim(F) + 1$ of the polytopes $D_i$ intersect $F$. A more general ``mixed'' version of this statement also exists, where a tuple of polytopes $P_1,\dots,P_n$ is considered instead of a single polytope $P$ (with $D_i\subset P_i$). The complete answer for rational polytopes in the mixed case was first given in \cite[\S 4.7, Corollary 9]{Roj94} and later extended to all convex polytopes in \cite{BS19} (see also \cite[Remark 3.6]{BS19}). The conditions for the “unmixed” case discussed here were first presented in \cite[Remark 3.9]{Es05} and later rediscovered in \cite[Corollary 3.7]{BS19}.

This problem was initially considered in \cite{Roj94} in the context of theoretical computer science. Recall that by the Bernstein–Kouchnirenko theorem (also known as the BKK bound), the number of solutions of a (sparse) polynomial system equals the mixed volume of the corresponding Newton polytopes in the Newton nondegenerate case (see \S \ref{bk_th_sec}). A standard method for numerically solving a polynomial system $f$ is the homotopy continuation method (see, e.g., \cite{CL}). Briefly, the method works as follows: first, we consider a start system $g$ with the same Newton polytopes, whose solutions are known; then, we construct a homotopy from 
$f$ to $g$ and track the solutions along the homotopy (using a standard ODE predictor method). The mixed volume equals the number of solutions and thus serves as a key measure of the method's computational complexity.

%Some sufficient (but not necessary) conditions are also given in \cite{Che19}. These conditions are important for computational geometry (see \cite{Che19}). Calculation of the usual volume is much more efficient than calculation of the mixed volume (see \cite{BEF00} and \cite{CL15}) so these conditions provide efficient calculation of the mixed volume of certain collections of polytopes.

We now consider the following question: how does the mixed volume change if we weaken the bound from $\dim(F)+1$ to $\dim(F)$? We partially answer this question by defining \textbf{semi-interlaced} polytopes and providing a combinatorial formula for their mixed volume in terms of the usual volumes of certain associated polytopes (see Theorem \ref{semith}). The answer is partial because we impose additional restrictions on the polytopes (see Definition \ref{daughter}). Strictly speaking, semi-interlaced polytopes do not generalize interlaced polytopes (see Figure \ref{tetra}). We present two different proofs of the formula for the mixed volume of semi-interlaced polytopes:

\begin{itemize}
    \item \S \ref{convgeom_pr_sec}: A convex-geometric proof using a formula for the mixed volume;
    \item \S \ref{algem_pr_sec}: An algebro-geometric proof based on the geometry of projective toric varieties.
\end{itemize} 

The first proof is purely combinatorial, relying on a formula of Khovanskii for the mixed volume (see \S \ref{Kh-formula_subsec}), and applies to arbitrary convex polytopes. The second proof interprets the formula in terms of singularities of projective toric varieties (recalled in \S\ref{pr_tor_subsec} and \ref{mult_tor_var_subsec}) but is restricted to lattice polytopes. Note that not every convex polytope is combinatorially equivalent to a convex lattice polytope (see, e.g., \cite{Z08}).

Computing the mixed volume is also an important problem, but is a significantly harder numerical problem than computing the usual volume (see, e.g., \cite{DGH98} and \cite{Che19}). The formula for the mixed volume of semi-interlaced polytopes is computationally efficient because it expresses the mixed volume in terms of volumes of lower-dimensional polytopes associated with the given polytope, without requiring Minkowski summation.

A concrete example of semi-interlaced polytopes that arises in applications is given below. A more general version of this definition is provided in \S \ref{off_sec}.

\begin{definition}[cf. Definition \ref{off_glob_def}] \label{off_def}
    Consider a finite set $\Ps\subset \Z^m_{\ge 0} \oplus \Z^{n-m}$ with coordinates $x_1,\dots, x_m, x_{m+1}, \dots, x_n$. The \textbf{off-coordinate} polytopes are defined as $$D_i = \begin{cases}
        \Conv(\Ps \setminus \{x_i = 0\}) &  1\le i \le m, \\
        \Conv(\Ps) & m < i \le n.
    \end{cases}$$
\end{definition}

In \S \ref{alg_deg_subsec} we show that the mixed volume of off-coordinate polytopes appears in the computation of algebraic degrees (Maximum Likelihood, Euclidean Distance and Polar degrees). Motivated by the connections to Arnold’s monotonicity problem (\cite[1982-16]{Arn04}; see, e.g., \cite{Sel24} and \cite{Sel25} for a more up-to-date version), we discuss when the mixed volume of off-coordinate polytopes vanishes (see Questions \ref{neg_comb_que} and \ref{voff_2=0_que}). 

%Also note that such formulas for the mixed volume are important for the efficient calculation of the mixed volume (see e.g. \cite{Roj94} and \cite{Che19}), for instance, in the context of the homotopy continuation method for solving polynomial systems (see e.g. \cite{CL}).

\section{Preliminaries}

\subsection{Polyhedral geometry and the mixed volume}\label{Convex-sub-sec}

This subsection recalls basic notions of mixed volumes and support functions. For a comprehensive introduction to convex geometry, we refer to \cite{Ew}. Throughout this paper, we consider only convex polytopes.

\begin{definition} \label{polytope_def}
A \textbf{polytope} is the convex hull of a finite set of points in $\R^n$. A polytope is called \textbf{lattice} if all its vertices belong to the lattice $\Z^n$.
\end{definition}

\begin{definition} \label{polyhedron_def}
A \textbf{polyhedron} is the intersection of finitely many half-spaces. A bounded polyhedron is a polytope.
\end{definition}

\begin{definition}
The \textbf{Minkowski sum} $P_1 + P_2$ of two polytopes is the polytope defined as $\{p_1+p_2,  \mid p_1 \in P_1, \ p_2 \in P_2\}$. If we consider $\R^n$ as an affine space, then the Minkowski sum is defined up to translation.
\end{definition}

\begin{sign}\label{lattice_vol_sign}
Let $A$ be an affine space equipped with a full-rank lattice $L \subset A$. We use the \textbf{lattice volume} $\Vol_\Z$, which is normalized so that the volume of a unit simplex of $L$ equals $1$. In particular, the lattice volume of any lattice polytope is an integer. In the standard space $\R^n$, we consider the standard lattice $\Z^n$. 
%When performing operations on affine spaces, we assume the corresponding lattices transform accordingly.
\end{sign}

\begin{sign}\label{volumes_in_lattices_sign}
Consider an affine space $A$ with a full-rank lattice $L$. An affine subspace $A' \subset A$ is called \textbf{rational}, if $\rk (\aff (A') \cap L) = \dim (\aff (A'))$. In this case, we assume that $A'$ is equipped with the lattice $L' = L \cap A'$, and that the quotient space $A / A'$ is equipped with the quotient lattice $L / L'$.
\end{sign}

\begin{sign}\label{projection_sign}
Suppose that $F$ is a subset of an affine space $A$ with a full-rank lattice $L \subset A$, and that its affine span $\text{aff} (F)$ is rational (with the corresponding lattice $K \subset \text{aff} (F)$). Denote by $$\pi_F: A \to A/\text{aff}(F)$$ the map that sends  $\text{aff} (F)$ to the origin and whose restriction on the corresponding lattices, $$\pi_F|_L: L \to L/ K,$$ is surjective.
\end{sign}

\begin{definition}\label{mv}
The \textbf{mixed volume} $\MV$ is the unique symmetric multilinear function on $n$-tuples of polytopes in an affine space $\R^n$ with full-rank lattice $\Z^n\subset \R^n$ that satisfies the following conditions:
		\begin{enumerate}
			\item It is symmetric in its arguments.
			\item $\MV(P, \dots, P) = \Vol_\Z(P)$ for any polytope $P$.\\
			\item For any polytopes $P_1,P_2,\dots , P_n$ and $P_1^\prime$ and non-negative numbers $\lambda, \lambda^\prime$ the following equation holds 
			$$\MV(\lambda P_1 + \lambda^\prime P_1^\prime, P_2, \dots, P_n) = \lambda \cdot \MV(P_1,P_2, \dots, P_n) + \lambda^\prime \cdot  \MV(P_1^\prime, P_2, \dots, P_n),$$
			where the sum in the left hand side is the Minkowski sum and multiplication is the homothety.
		\end{enumerate}
\end{definition}

Note that the mixed volume of lattice polytopes is a non-negative integer.

\begin{definition}\label{support_face_def}
Let $P \subset \R^n$ be a polytope (or polyhedron) and $\xi \in (\R^n)^*$ a covector. The \textbf{support face} $P^\xi$ is the face of $P$ where the linear functional $\xi$ attains its maximum. The \textbf{support function} $H(\xi;P)$ is this maximum value. For a finite set $\Ps$, a \textbf{face} is the intersection of $\Ps$ with a face of $\Conv(\Ps)$. The support function for finite sets is defined analogously. For a finite lattice set $\Ps$, we define $\Vol_\Z(\Ps) = \Vol_\Z(\Conv(\Ps))$ and $\dim(\Ps) = \dim(\Conv(\Ps))$.
\end{definition}

\begin{proposition}(\cite[Assertion on p.41]{Kh78}; see \cite[Lemma 1.2]{Es08} for a proof) \label{mv=0_prop}
The mixed volume of polytopes is zero if and only if some $k$ of them have a Minkowski sum of dimension strictly less than $k$.
\end{proposition}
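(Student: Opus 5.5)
We prove the two implications separately. Sufficiency uses multilinearity and monotonicity; necessity uses the Bernstein--Kouchnirenko count of solutions. Throughout, write $\MV(P_1,\dots,P_n)$ for $n$ polytopes in $\R^n$.

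\emph{Sufficiency.} Assume that for some index set $I$ with $|I|=k$ we have $\dim(\sum_{i\in I}P_i)<k$. Translate each $P_i$ with $i\in I$ so that it contains the origin. This does not change the mixed volume. Then all $P_i$ with $i\in I$ lie in a linear subspace $V$ with $\dim V=d<k$. By monotonicity, $\MV$ is bounded above by the mixed volume obtained after replacing $P_i$ by a large ball $B_V$ of $V$ for $i\in I$, and replacing the remaining $P_j$ by a large cube $C$. By multilinearity and symmetry this bound equals a coefficient of the polynomial $\Vol_\Z(\lambda B_V+\mu C)$, namely the coefficient of $\lambda^k\mu^{n-k}$. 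The Minkowski sum $\lambda B_V+\mu C$ is contained in $\lambda B_V+\mu C'$, where $C'$ is a suitable box. The volume of that set grows at most like $\lambda^{d}$ in $\lambda$. Hence every coefficient of $\lambda^k$ with $k>d$ vanishes, so $\MV=0$. This step is routine. The only points to check are the polynomiality of $\Vol_\Z(\lambda B_V+\mu C)$ and the degree bound in $\lambda$ via a Fubini-type estimate along $V$ and a complement of $V$.

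\emph{Necessity.} We argue by contrapositive: assuming every $k$ of the polytopes have a Minkowski sum of dimension at least $k$, we show $\MV>0$. By continuity and monotonicity we may shrink each $P_i$ to a subpolytope, preserving the dimension condition, and we may take rational, hence after scaling lattice, polytopes. The dimension condition is exactly Hall's marriage condition on the family of direction spaces $L_i=\mathrm{span}(P_i-P_i)$, in the matroid (Rado) form. By Rado's theorem there are segments $s_i\subset P_i$ whose directions $v_i$ are linearly independent. Mixed volume is monotone, so $\MV(P_1,\dots,P_n)\ge \MV(s_1,\dots,s_n)=|\det(v_1,\dots,v_n)|>0$. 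The last equality follows by expanding $\Vol_\Z(\sum \lambda_i s_i)$: this sum is a parallelepiped, whose volume is $\prod\lambda_i\,|\det|$.

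The main obstacle is the matroid step, namely passing from the dimension condition to independent directions. We will apply Rado's theorem to the vector matroid on $\R^n$ with candidate sets $E_i=\{p-q : p,q \text{ vertices of } P_i\}$. Rado's condition requires that $\rk(\bigcup_{i\in I}E_i)\ge|I|$ for all $I$. Here $\rk(\bigcup_{i\in I}E_i)=\dim\sum_{i\in I}P_i$, so the hypothesis gives exactly this condition, and Rado's theorem yields a transversal that is a basis.
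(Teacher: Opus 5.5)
Your proof is correct. The paper gives no argument of its own for this statement: it quotes Khovanskii and refers to Esterov for a proof. So your write-up is a self-contained replacement rather than a variant of something in the text.

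Your route is purely convex-geometric. For the ``if'' direction you use monotonicity to bound $\MV$ by a mixed volume of two bodies. That bound is read off as a coefficient of $\lambda^k\mu^{n-k}$ in $\Vol_\Z(\lambda Q+\mu C)$, and it vanishes because the volume grows only like $\lambda^{d}$ with $d<k$. For the ``only if'' direction you apply Rado's theorem on the vector matroid to obtain segments $s_i\subset P_i$ with linearly independent directions $v_i$, and then $\MV(s_1,\dots,s_n)=|\det(v_1,\dots,v_n)|>0$.

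In the context of this paper, the approach has two advantages:
\begin{itemize}
\item It never uses Theorem \ref{bk_th}. This matters because the paper derives Corollary \ref{gen_sys_no_sol_rem} from Theorem \ref{bk_th} \emph{together with} this proposition. A root-counting proof of the proposition would therefore have to avoid that corollary to stay non-circular.
\item It works verbatim for non-lattice polytopes, which is what Remark \ref{conv_geon_non_lattice_rem} requires.
\end{itemize}

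A few cosmetic fixes:
\begin{itemize}
\item Your opening sentence says necessity uses the Bernstein--Kouchnirenko count, but the argument never does. The reduction to rational/lattice polytopes is likewise unused and can be deleted.
\item Your bound equals $\binom{n}{k}^{-1}$ times the coefficient of $\lambda^k\mu^{n-k}$, not the coefficient itself. This is harmless for the vanishing argument.
\item Since $\MV$ is defined here only for polytopes, replace the ball $B_V$ by a large cube in $V$.
\item Concluding $\MV=0$ from an upper bound of $0$ uses nonnegativity of mixed volumes. State it explicitly alongside monotonicity.
\end{itemize}
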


\begin{proposition}\label{mv=prod_rem}(see, e.g., \cite[Lemma 4]{ST10} for a proof)
Let $P_1,\dots,P_n \subset \R^n$ be polytopes with $P_1,\dots,P_k$ contained in a $k$-dimensional affine subspace $Q$. Then:
$$\MV(P_1,\dots,P_n) = \MV(P_1,\dots,P_k) \cdot \MV(\pi_Q(P_{k+1}) ,\dots, \pi_Q(P_n)),$$
where the mixed volume $\MV(P_1,\dots,P_k)$ is computed in $Q$.
\end{proposition}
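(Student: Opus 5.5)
Both sides of the identity are multilinear in $P_{k+1},\dots,P_n$, so the plan is to reduce to the case where all $n-k$ of these arguments are equal. After that I would identify the coefficient of the top power of a scaling parameter in two ways.

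\textbf{Step 1 (reduction by polarization).} Fix $P_1,\dots,P_k\subset Q$. Both sides are symmetric and Minkowski-multilinear in $(P_{k+1},\dots,P_n)$, using Definition \ref{mv} and the linearity of $\pi_Q$ with respect to Minkowski sums. A symmetric multilinear form is determined by its diagonal, by the polarization formula. So it suffices to take $P_{k+1}=\dots=P_n=R$.

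\textbf{Step 2 (a volume expansion).} Let $P=\sum_{i\le k}t_iP_i$ with $t_i\ge 0$. Then $P\subset Q'$, a translate of the linear direction of $Q$. I would compute $\Vol_\Z(P+\lambda R)$ in two ways and compare the coefficients of $\lambda^{n-k}$.

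First, multilinearity gives
\[
\Vol_\Z(P+\lambda R)=\sum_j\binom{n}{j}\lambda^{j}\MV(P^{[n-j]},R^{[j]}).
\]
Since $\dim P\le k$, every term with $j<n-k$ vanishes by Proposition \ref{mv=0_prop}: $n-j>k$ copies of $P$ have a Minkowski sum of dimension $\le k<n-j$. Hence the lowest-order surviving term is $\binom{n}{k}\lambda^{n-k}\MV(P^{[k]},R^{[n-k]})$.

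Second, I would compute the same coefficient geometrically. Slice $P+\lambda R$ by the fibers of $\pi_Q$. For $y\in\pi_Q(\lambda R)$, the fiber is $P+(\lambda R)_y$, where $(\lambda R)_y$ is the fiber of $\lambda R$, of dimension at most $k$. So
\[
\Vol_\Z(P+\lambda R)=\int_{\pi_Q(\lambda R)}\Vol_\Z\bigl(P+(\lambda R)_y\bigr)\,dy .
\]
Each integrand equals $\Vol_\Z(P)+O(\lambda)$, and the domain has volume $\lambda^{n-k}\Vol_\Z(\pi_Q R)$. The lattice normalizations multiply correctly because the lattice $\Z^n$ splits as an extension of $L/K$ by $K$, as in Notations \ref{volumes_in_lattices_sign} and \ref{projection_sign}. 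Therefore the coefficient of $\lambda^{n-k}$ is $\Vol_\Z(P)\Vol_\Z(\pi_Q R)$, with the same normalization convention as Definition \ref{mv}, item 2, applied in $Q$ and in $\R^n/Q$ respectively. I expect the combinatorial normalization factors ($n!$ versus $k!(n-k)!$, i.e.\ $\binom nk$) to cancel; this needs to be checked against the convention used in Definition \ref{mv}.

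\textbf{Step 3 (extracting mixed volumes).} Comparing the two computations gives
\[
\binom{n}{k}\MV\Bigl(\bigl(\textstyle\sum t_iP_i\bigr)^{[k]},R^{[n-k]}\Bigr)
=\Vol_\Z\bigl(\textstyle\sum t_iP_i\bigr)\cdot\Vol_\Z(\pi_QR),
\]
up to the normalization constant. Both sides are polynomials in the $t_i$. Taking the coefficient of $t_1\cdots t_k$ yields $\MV(P_1,\dots,P_k,R,\dots,R)=\MV_Q(P_1,\dots,P_k)\cdot\MV(\pi_QR,\dots,\pi_QR)$, which combined with Step 1 proves the claim.

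The main obstacle is the rigorous $O(\lambda)$ estimate in Step 2 and getting the lattice normalization in the fiber integral right. For the estimate, I would use that the fiber volume is continuous and bounded by $\Vol_\Z(P+\lambda B)$ for a bounded set $B$ containing all fibers of $R$, which is $\Vol_\Z(P)+O(\lambda)$.
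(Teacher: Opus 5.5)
Your argument is correct in substance. It necessarily differs from the paper, which gives no proof of this proposition and only refers to \cite[Lemma 4]{ST10}. You prove the formula directly by convex geometry in three moves: polarize to the diagonal; compute the $\lambda^{n-k}$-coefficient of $\Vol_\Z(P+\lambda R)$ once by multilinearity and once by slicing along the fibres of $\pi_Q$; then extract the $t_1\cdots t_k$-coefficient.

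A route closer to the paper's own toolkit would be a Bernstein--Kouchnirenko count. For lattice polytopes, generic equations with supports in $P_1,\dots,P_k\subset Q$ cut out $\MV(P_1,\dots,P_k)$ cosets of the $(n-k)$-dimensional subtorus with monomial lattice $\pi_Q(\Z^n)$. On each coset, the remaining equations become generic Laurent polynomials with Newton polytopes $\pi_Q(P_{k+1}),\dots,\pi_Q(P_n)$ (cf.\ Definition \ref{substitution_def}). That argument is short, but it is confined to lattice polytopes and needs a genericity check. Yours works verbatim for arbitrary polytopes, using Lebesgue measure and orthogonal projection. This generality is what Remark \ref{conv_geon_non_lattice_rem} needs, since the convex-geometric proof of Theorem \ref{semith} uses this proposition.

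The normalization you left unchecked does work out, but your Step 2 is off by exactly $\binom{n}{k}$. With the paper's convention, $\Vol_\Z=n!\,\mathrm{vol}$, where $\mathrm{vol}$ is Lebesgue measure giving the lattice covolume $1$. Fubini holds for $\mathrm{vol}$, not for $\Vol_\Z$. Here you use that the covolume of $\Z^n$ is the product of the covolumes of $\Z^n\cap Q_0$ and of $\pi_Q(\Z^n)$, where $Q_0$ is the direction space of $Q$. Hence
\[
\Vol_\Z(P+\lambda R)=\frac{n!}{k!}\int_{\pi_Q(\lambda R)}\Vol_\Z\bigl(P+(\lambda R)_y\bigr)\,dy,\qquad \int_{\pi_Q(\lambda R)}dy=\frac{\lambda^{n-k}}{(n-k)!}\Vol_\Z(\pi_Q R).
\]
So the $\lambda^{n-k}$-coefficient is $\binom{n}{k}\Vol_\Z(P)\Vol_\Z(\pi_Q R)$, and this $\binom{n}{k}$ cancels the one from the multilinear expansion. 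The identity in Step 3 is therefore $\MV\bigl((\sum_i t_iP_i)^{[k]},R^{[n-k]}\bigr)=\Vol_\Z(\sum_i t_iP_i)\,\Vol_\Z(\pi_Q R)$, with no constant. Taken literally, your display would instead give $\binom{n}{k}\MV(P_1,\dots,P_k,R,\dots,R)=\MV(P_1,\dots,P_k)\,\MV(\pi_Q R,\dots,\pi_Q R)$, which is false for $0<k<n$.

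For the $O(\lambda)$ bound, note that the fibres lie in different translates of $Q_0$. Fix a linear section $s$ of the linear projection $\pi_Q$ with kernel $Q_0$. Then $(\lambda R)_y-s(y)\subset\lambda B$, where $B=(\mathrm{id}-s\circ\pi_Q)(R)$ is a polytope in $Q_0$. This gives $\Vol(P)\le\Vol(P+(\lambda R)_y)\le\Vol(P+\lambda B)=\Vol(P)+O(\lambda)$ uniformly in $y$. Continuity of the fibre volume is not needed.
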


\subsection{Bernstein--Kouchnirenko theorem (BKK bound)}  \label{bk_th_sec}

This subsection recalls the Bernstein–Kouchnirenko theorem  from \cite{Ber75} (also known as the BKK bound; see Remark~\ref{BKK_bound_rem}). For a detailed survey of the Bernstein–Kouchnirenko–Khovanskii theory, we refer to \cite{EKK}.

The \textbf{support} of a Laurent polynomial is the set of exponents of its nonzero monomials. The \textbf{Newton polytope} is the convex hull of the support.

\begin{definition}\label{restriction_def}
Let $f = \sum_{\omega \in \Z^n} a_\omega z^\omega$ be a Laurent polynomial with Newton polytope $P \subset \R^n$. For a polytope (or a set) $\Gamma \subset \R^n$ the \textbf{restriction} $f|_\Gamma$ is defined as $\sum\limits_{\omega \in \Gamma} a_\omega z^\omega$. We denote by $f^\xi$ the restriction $f|_{P^\xi}$ on the support face.
\end{definition}

\begin{theorem}[Bernstein--Kouchnirenko \cite{Ber75}] \label{bk_th}
Let $f_1, \dots, f_n \in \C[z_1^{\pm 1}, \dots, z_n^{\pm 1}]$ be Laurent polynomials with Newton polytopes $P_1, \dots, P_n$ and nondegenerate coefficients. Then the number of solutions to $f_1 = \dots = f_n = 0$ in the torus $(\C \setminus 0)^n$ equals the mixed volume $\MV(P_1, \dots, P_n)$.

The nondegeneracy condition requires that for any nonzero covector $\xi \in (\R^n)^* \setminus 0$, the system $f_1^\xi = \dots = f_n^\xi = 0$ has no solutions in the torus $(\Cc)^n$.
\end{theorem}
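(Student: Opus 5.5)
We will prove the theorem in two stages. First, we show that the number of solutions is the same for all nondegenerate coefficient choices. Second, we identify this common number with $\MV(P_1,\dots,P_n)$ by checking the axioms of Definition~\ref{mv}.

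\textbf{Stage 1: invariance of the count.} Consider the space of all coefficient tuples with fixed supports. Nondegenerate tuples form a Zariski-open set $U$, because degeneracy is a finite union of algebraic conditions, one for each face tuple $(P_1^\xi,\dots,P_n^\xi)$. We also need $U$ to be nonempty, which follows by a dimension count on each face system. Over $U$, consider the incidence variety $\{(c,z): f_1=\dots=f_n=0\}\subset U\times(\Cc)^n$.
\begin{itemize}
\item \emph{Properness.} Suppose a family of roots $z(t)$ escapes to the boundary of the torus as $c(t)\to c_0\in U$. Take a Puiseux expansion $z(t)=a\,t^{\xi}+\dots$ with $\xi\neq 0$. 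The lowest-order terms then give a solution $a\in(\Cc)^n$ of the face system $f_1^{\xi'}=\dots=f_n^{\xi'}=0$ with $\xi'=-\xi$. This contradicts nondegeneracy of $c_0$.
\item \emph{Finiteness and simplicity.} Nondegeneracy for $\xi=0$ is vacuous, so isolatedness needs a separate argument. After shrinking $U$ to its generic part, the roots are simple by Bertini/Sard applied to the projection.
\end{itemize}
Hence the projection is a finite unramified covering over a connected set, and the root count $N(P_1,\dots,P_n)$ is constant. For the remaining nondegenerate systems we count roots with multiplicity, and properness gives the same number by conservation of number.

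\textbf{Stage 2: the count satisfies the axioms of the mixed volume.}
\begin{enumerate}
\item \emph{Symmetry.} $N$ is symmetric in $P_1,\dots,P_n$, since reordering the equations does not change the solution set.
\item \emph{Additivity.} Take $f_1=gh$ with generic $g,h$ having Newton polytopes $P_1,P_1'$. Then $f_1$ has Newton polytope $P_1+P_1'$, and its zero set is the union of the zero sets of $g$ and $h$. For generic coefficients these two contributions are disjoint, so
$$N(P_1+P_1',P_2,\dots)=N(P_1,P_2,\dots)+N(P_1',P_2,\dots).$$
The system with $f_1=gh$ is nondegenerate for generic $g,h$, because $(gh)^\xi=g^\xi h^\xi$. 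Together with Stage 1, this justifies using the product in place of a generic $f_1$.
\item \emph{Homogeneity.} Additivity gives $N(kP_1,\dots)=k\,N(P_1,\dots)$ for integers $k$. Since only lattice polytopes occur, this integer scaling is all we need.
\item \emph{Normalization (Kouchnirenko's case).} We need $N(P,\dots,P)=\Vol_\Z(P)$.
\begin{itemize}
\item Let $X_P$ be the projective toric variety of $P$. It is embedded by the monomials $z^\omega$, $\omega\in P\cap\Z^n$, possibly after replacing $P$ by a multiple so that enough lattice points are available.
\item Generic $f_i$ with Newton polytope $P$ are $n$ generic hyperplane sections of $X_P$. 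By properness, all of their common zeros lie in the open torus.
\item Their number is $\deg X_P$, the leading coefficient of the Hilbert function $k\mapsto\#(kP\cap\Z^n)$ times $n!$. By Ehrhart theory, this equals $n!\Vol(P)=\Vol_\Z(P)$.
\end{itemize}
\end{enumerate}
The uniqueness in Definition~\ref{mv} then gives $N=\MV$. Extending this from the lattice cone to the full setting is immediate by the polarization formula, $\MV=\frac{1}{n!}\sum_{J}(-1)^{n-|J|}\Vol_\Z(\sum_{j\in J}P_j)$, which holds for $N$ as well.

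\textbf{Main obstacle.} We expect the hardest step to be Stage 1. We must show that nondegeneracy alone, not just genericity, prevents roots from escaping to infinity. We also must treat the multiplicities of nongeneric but nondegenerate systems. The Puiseux-series (tropical) argument is what we would try first. The delicate point is that the limit must be taken along a curve in $U$ approaching $c_0$ and not within the fiber. For this we would use the curve selection lemma applied to the closure of the incidence variety in $U\times X_{P_1+\dots+P_n}$.
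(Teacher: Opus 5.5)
The paper does not prove this statement. It quotes it from \cite{Ber75} and uses it as a black box, so there is no internal proof to compare against. Your argument is a correct, standard route: invariance of the root count (with multiplicity) over the nondegenerate locus via the Puiseux/curve-selection properness argument, then multi-additivity via $f_1=gh$ and $(gh)^\xi=g^\xi h^\xi$, then Kouchnirenko's unmixed case via the degree of a projective toric variety, and finally polarization. Bernstein's original proof works differently. It deforms the coefficients by a generic lifting $c_\omega t^{\lambda(\omega)}$ and counts Puiseux branches cell by cell over the induced mixed subdivision. That computes $\MV$ directly (it is the algebraic counterpart of Khovanskii's formula, Lemma \ref{hovamv}) and underlies polyhedral homotopy continuation. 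Your route avoids subdivisions entirely and uses the same toric-compactification picture as \S\ref{algem_pr_sec}, at the cost of importing Ehrhart theory and the degree theory of projective varieties.

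Two steps need tightening.

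First, conservation of number at a nongeneric nondegenerate $c_0$ needs the fiber over $c_0$ to be finite. This is not a separate issue: properness makes that fiber a compact algebraic subset of the affine torus $(\Cc)^n$, hence finite. Similarly, each face condition is a priori only constructible. Closedness of the degenerate locus follows from the same Puiseux argument, since limits of face solutions give solutions of smaller face systems. Alternatively, you can argue locally around $c_0$.

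Second, the normalization chain is not correct without rescaling. For $A=P\cap\Z^n$, the Hilbert function of $X_A$ is $k\mapsto\#(kA)$ (the $k$-fold sumset), not the Ehrhart function. Moreover, $\phi_A$ is $\ind_{\Z^n}(A)$-to-one on the torus. So in general $\deg X_A=\Vol_\Z(P)/\ind_{\Z^n}(A)$ (cf. Remark \ref{BK_toric_rem}), and the number of torus roots is $\ind_{\Z^n}(A)\cdot\deg X_A$, not $\deg X_A$. For example, for a Reeve tetrahedron of height $r$, $X_A\cong\mathbb P^3$ has degree $1$, while there are $r=\Vol_\Z(P)$ torus roots. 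Your parenthetical about passing to a multiple is the right fix, but make it explicit:
\begin{enumerate}
\item For $k\ge n-1$ (Bruns--Gubeladze--Trung), $kP$ has the integer decomposition property.
\item Hence its lattice points affinely generate $\Z^n$, and the Hilbert function of $X_{kP\cap\Z^n}$ is the Ehrhart function of $kP$.
\item This gives $N(kP,\dots,kP)=\Vol_\Z(kP)$; then divide by $k^n$ using the multi-additivity you already proved.
\end{enumerate}
Treat $\dim P<n$ separately; there both sides vanish by a dimension count.
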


The following corollary follows from Theorem \ref{bk_th} and Proposition \ref{mv=0_prop}, since we can consider each $f_i$ as a Laurent polynomial in variables $z_1,\dots, z_n, z_{n+1}$ that does not depend on $z_{n+1}$.

\begin{cor}\label{gen_sys_no_sol_rem}
Let $\Ps_1, \dots, \Ps_{n+1} \subset \Z^n$ be finite lattice sets, and let $f_1, \dots, f_{n+1} \in  \C[z_1^{\pm 1}, \dots, z_n^{\pm 1}]$ be Laurent polynomials with supports $\Ps_1, \dots, \Ps_{n+1}$ respectively and generic coefficients. Then the system $f_1 = \dots = f_{n+1} = 0$ has no solutions in $(\Cc)^n$. 
\end{cor}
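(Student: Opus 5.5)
We regard each $f_i$ as a Laurent polynomial in the $n+1$ variables $z_1,\dots,z_n,z_{n+1}$ that does not involve $z_{n+1}$. Its support then becomes $\Ps_i \times \{0\} \subset \Z^{n+1}$, and its Newton polytope $\Conv(\Ps_i)\times\{0\}$ lies in the hyperplane $H=\{x_{n+1}=0\}\subset\R^{n+1}$. The plan has two steps. First, Proposition \ref{mv=0_prop} shows that the mixed volume of these $n+1$ Newton polytopes in $\R^{n+1}$ is zero. Second, Theorem \ref{bk_th} turns this into the statement that the system has no solutions in $(\Cc)^{n+1}$. Since solutions in $(\Cc)^n$ and in $(\Cc)^{n+1}$ correspond, this will give the claim.

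For the first step, take all $k=n+1$ polytopes at once. Their Minkowski sum lies in $H$, so its dimension is at most $n<k$. Proposition \ref{mv=0_prop} then gives $\MV(\Conv(\Ps_1)\times 0,\dots,\Conv(\Ps_{n+1})\times 0)=0$.

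The second step is the one needing care: checking that generic coefficients make the extended system nondegenerate in the sense of Theorem \ref{bk_th}. We must show that for every nonzero $\xi\in(\R^{n+1})^*$ the truncated system $f_1^\xi=\dots=f_{n+1}^\xi=0$ has no solutions in $(\Cc)^{n+1}$. Each $f_i^\xi$ is the restriction of $f_i$ to a face of $\Ps_i$, determined by the restriction of $\xi$ to $H$, so it is again a polynomial in $z_1,\dots,z_n$ alone. We then argue by induction on $n$ that $n+1$ generic Laurent polynomials in $n$ variables have no common zero in the torus.
\begin{itemize}
\item For $n=0$, the $f_i$ are generic nonzero constants, so there is nothing to solve.
\item In general, consider the incidence variety $\{(z,c): f_1(z)=\dots=f_{n+1}(z)=0\}$ in $(\Cc)^n\times\prod_i\C^{\Ps_i}$.
\item For fixed $z$ in the torus, each equation $f_i(z)=0$ is a nonzero linear condition on the coefficients of $f_i$, because every monomial is nonzero on the torus. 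These conditions involve disjoint sets of coefficients, so the fibre over $z$ has codimension $n+1$ in coefficient space.
\item Hence the incidence variety has dimension $n+\sum_i|\Ps_i|-(n+1)<\dim\prod_i\C^{\Ps_i}$, and its projection to coefficient space is not dominant.
\item Applying this to each of the finitely many face-truncations (one for each tuple of faces) shows that a Zariski-generic coefficient choice avoids all the bad loci simultaneously.
\end{itemize}

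This dimension-counting argument in fact proves the corollary directly, without Bernstein--Kouchnirenko. In the paper's intended route, its role is only to verify the nondegeneracy hypothesis. With nondegeneracy in hand, Theorem \ref{bk_th} gives that the number of solutions in $(\Cc)^{n+1}$ equals the mixed volume, which is $0$. A solution $z\in(\Cc)^n$ of the original system would give solutions $(z,t)$ for every $t\in\Cc$, so the original system has no solutions in $(\Cc)^n$. I expect this nondegeneracy verification to be the only real obstacle.
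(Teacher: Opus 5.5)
Your proposal is correct and follows the paper's own one-line argument: view the $f_i$ as Laurent polynomials in $z_1,\dots,z_{n+1}$, get $\MV=0$ from Proposition \ref{mv=0_prop} because all $n+1$ Newton polytopes lie in the hyperplane $\{x_{n+1}=0\}$, and conclude with Theorem \ref{bk_th}. Your incidence-variety dimension count soundly supplies the genericity/nondegeneracy justification that the paper leaves implicit; as you note, the covector $\xi=e_{n+1}^*$ truncates nothing, so that nondegeneracy check is literally the statement being proved, and the count alone already proves the corollary.
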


\begin{Remark}\label{BKK_bound_rem}
The Bernstein–Kouchnirenko theorem is often referred to as the \textbf{BKK bound}. This name emphasizes that the number of isolated solutions in the torus $(\Cc)^n$ is bounded from above by the mixed volume $\MV(P_1, \dots, P_n)$, even if the nondegeneracy condition is violated.
%when the nondegeneracy condition is violated, the number of isolated solutions in the torus $(\mathbb{C}^*)^n$ becomes strictly less than $\MV(P_1, \dots, P_n)$.
\end{Remark}

\subsection{Interlaced polytopes} \label{inter_sec}

This subsection recalls the notion of interlaced polytopes, i.e. tuples of subpolytopes of a full-dimensional polytope whose mixed volume equals the lattice volume of the polytope.
    
	\begin{definition}\label{intdef}
		Polytopes $P_1,\dots,P_n$ are \textbf{interlaced} in $P = \Conv(\bigcup_i P_i)$ if for every face $F\subsetneq P$ of $P$ at least $\dim(F) + 1$ of the polytopes $P_1,\dots,P_n$ intersect $F$.
	\end{definition}

\begin{figure}[H]
		\begin{center}
	\includegraphics[scale=.2]{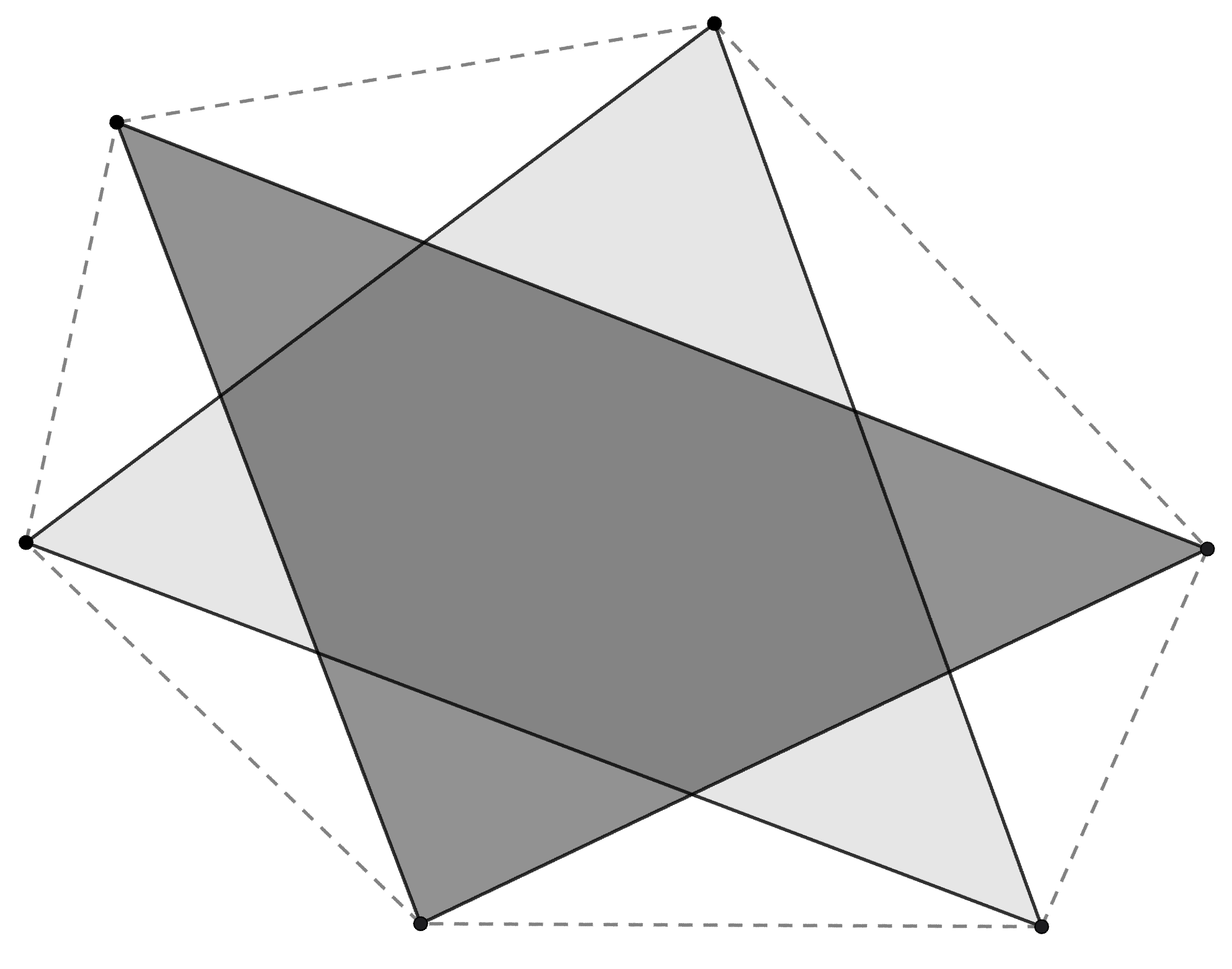}
		\end{center}
		\caption{
		\label{interlaced}	Interlaced triangles.}
\end{figure}

	\begin{proposition}(\cite[Remark 3.9]{Es05} or \cite[Corollary 3.7]{BS19})\label{interlaced_prop}
		If polytopes $P_1,\dots,P_n$ are interlaced in $P$, then their mixed volume $\MV(P_1,\dots, P_n)$ equals $\Vol_\Z(P)$. If $\dim(P) = n$, the converse also holds.
	\end{proposition}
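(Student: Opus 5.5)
We prove the direct statement by an algebro-geometric count via Theorem~\ref{bk_th} when all polytopes are rational. The general case then follows by continuity and multilinearity of $\MV$: rational polytopes with the same combinatorial incidences to faces of $P$ approximate arbitrary ones. Scaling by a positive integer multiplies both sides by the same power, so we may also assume all $P_i$ and $P$ are lattice polytopes.

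The first case is $\dim(P)<n$. Then $P_1+\dots+P_n$ lies in a translate of an affine space of dimension $<n$. Proposition~\ref{mv=0_prop}, applied with $k=n$, gives $\MV=0=\Vol_\Z(P)$, since the $n$-dimensional volume of a lower-dimensional polytope vanishes. So assume $\dim(P)=n$.

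Since $P_i\subset P$, monotonicity of the mixed volume gives $\MV(P_1,\dots,P_n)\le \MV(P,\dots,P)=\Vol_\Z(P)$. For the reverse inequality, take $f_1,\dots,f_n$ with supports equal to the lattice points of $P_i$ and generic coefficients. By Remark~\ref{BKK_bound_rem}, the number of isolated roots in $(\Cc)^n$ is at most $\MV(P_1,\dots,P_n)$, and equality holds if the system is nondegenerate. View each $f_i$ as a section of the ample line bundle $\mathcal O(1)$ on the projective toric variety $X_P$, which has dimension $n$. The sections $f_1,\dots,f_n$ have $\mathcal O(1)^n=\Vol_\Z(P)$ common zeros on $X_P$, counted with multiplicity, provided their common zero set is finite.

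The key step is that interlacing forces no common zeros on the boundary $X_P\setminus(\Cc)^n$. That boundary is the disjoint union of torus orbits $O_F$ for proper faces $F\subsetneq P$, with $\dim O_F=\dim F$. On $O_F$ the section $f_i$ restricts to $f_i|_F$, which is identically zero exactly when $P_i\cap F=\emptyset$. By interlacing, at least $\dim(F)+1$ of the restrictions $f_i|_F$ have nonempty supports in the lattice of $\aff(F)$, and their coefficients are generic. Corollary~\ref{gen_sys_no_sol_rem}, applied in the $\dim F$-dimensional torus $O_F$, shows these restrictions have no common zero there. Hence every common zero lies in the open torus. Applying the same argument to the restrictions $f_i^\xi$ (faces $F=P^\xi$) gives nondegeneracy of the system in the sense of Theorem~\ref{bk_th}. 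Then
$$\MV(P_1,\dots,P_n)=\#\{\text{roots in }(\Cc)^n\}=\mathcal O(1)^n=\Vol_\Z(P).$$
The point needing care is that intersection multiplicities on $X_P$ agree with the torus count. This holds because generic sections in the torus meet transversally by Bertini, and $X_P$ may be singular only at the boundary, where there are no common zeros.

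For the converse, assume $\dim P=n$ and that some proper face $F$ meets at most $\dim F$ of the $P_i$. The main obstacle is showing strict inequality. We use the same model. The $f_i$ with $P_i\cap F\neq\emptyset$ number at most $\dim F$, so their common zero set on $\overline{O_F}$ has positive dimension, or is empty only in degenerate situations, which we rule out by dimension counting. The sections $f_i$ with $P_i\cap F=\emptyset$ vanish identically on $O_F$. So $f_1,\dots,f_n$ have a common zero on the boundary, either a positive-dimensional component or an isolated one. In both cases part of the total degree $\Vol_\Z(P)$ is absorbed off the torus. For the positive-dimensional case we use excess intersection: a connected component $Z$ of the zero locus contributes a nonnegative amount, and it is positive because $\mathcal O(1)$ is ample. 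Hence the torus count is $<\Vol_\Z(P)$. This gives $\MV<\Vol_\Z(P)$, once we argue as above that for generic coefficients the torus count equals $\MV$ via BKK nondegeneracy, which is a generic condition by Theorem~\ref{bk_th}.

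If the excess-intersection step proves too delicate, we would instead argue convex-geometrically. Choose $\xi$ with $P^\xi=F$ and perturb $P$ by pushing $F$ slightly inward along $\xi$. This yields a polytope $P'\supset P_i$ for all $i$ with $\Vol(P')<\Vol(P)$, which forces the strict inequality directly. Pushing $F$ inward cuts off the $P_i$ that meet $F$, so the cut must be arranged to respect them. The construction of $P'$ is therefore the crux in this alternative.
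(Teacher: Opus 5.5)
Your forward direction is the paper's own argument: generic sections on the projective toric variety, interlacing plus Corollary~\ref{gen_sys_no_sol_rem} on each boundary orbit, hence all $\Vol_\Z(P)$ intersection points lie in the torus. For lattice (hence, after scaling, rational) polytopes it is fine.

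Your reduction of arbitrary polytopes to rational ones does not work as stated. There are polytopes not combinatorially equivalent to any rational polytope, as the paper itself notes. So approximations ``with the same combinatorial incidences to faces of $P$'' need not exist. An arbitrary rational perturbation may also create new faces of $\Conv(\bigcup P_i)$ on which interlacing fails. The paper's toric proof is likewise restricted to lattice polytopes. For arbitrary polytopes you can use Khovanskii's formula (Lemma~\ref{hovamv}) exactly as in the first step of \S\ref{convgeom_pr_sec}, taking heights $0$ on $P_i$ and $-1$ on $P$. For every $\xi\neq0$, the $\tl P_i(\xi)$ with $P_i\cap P^\xi\neq\emptyset$ equal $P_i\cap P^\xi$. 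There are at least $\dim P^\xi+1$ of them, all in $\aff(P^\xi)$, so by Proposition~\ref{mv=0_prop} every $\xi\neq0$ term in (\ref{le}) vanishes. This gives $\MV(P_1,\dots,P_n)=\Vol_\Z(P)$ with no approximation.

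The converse is not proved in the paper at all; it is quoted from \cite{Es05}, \cite{BS19}. Your argument for it has a gap exactly at the step that carries it: the claim that a boundary connected component of the common zero locus contributes a \emph{positive} amount to $\mathcal O(1)^n$. Excess-intersection contributions are not positive in general (think of a component with negative normal bundle), and nothing in your write-up proves positivity here. What you need is a genuine theorem: positivity of contributions for zero schemes of sections of ample bundles. Equivalently, you can use positivity of the contributions of distinguished varieties in the refined B\'ezout theorem, applied to $X_\Ps\cap H\subset\mathbb P^N$ (Fulton, \emph{Intersection Theory}, Ch.~12). It must be invoked explicitly.

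The existence of a boundary common zero should also be argued by ampleness, not by ``dimension counting'' or by excluding ``degenerate situations''. At most $\dim F$ of the restrictions to $\overline{O_F}$ are not identically zero. These are nonzero sections of an ample line bundle on the $\dim F$-dimensional projective variety $\overline{O_F}$, so their common zero set is never empty. With these two points supplied, your route works, since each torus root is an isolated transversal point contributing exactly $1$.

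Your fallback cannot work even in principle. Any convex $P'$ containing all $P_i$ contains $P=\Conv(\bigcup P_i)$, so $\Vol(P')\ge\Vol(P)$.
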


We recall a proof of this proposition using projective toric varieties in \S {\ref{algem_pr_sec}}.
 
\subsection{A formula of Khovanskii for the mixed volume}\label{Kh-formula_subsec}

%\begin{definition}
%The \emph{support face} $P^\xi$ of the polyhedron $P$ is the maximal face of $P$ on which the minimum of the functional $\xi$ is achieved. The support function $\xi(P)$ of the polyhedron $P$ is the minimal value of $\xi$ on the polyhedron $P$. A \emph{face} of a finite set $\Ps \subset \R^n$ is the intersection of a face of its convex hull $\Conv(\Ps)$ with the set $\Ps$. The \emph{support face} and the \emph{support function} of a finite set are defined in the same way as for convex polyhedra.
%\end{definition}

Consider a covector $\xi \in (\R^n)^*$ and a polyhedron $\tl P \subset \R^n \oplus \R^1$. If the linear functional $(\xi,1) \in (\R^n)^* \oplus (\R^1)^*$ is bounded from above on $\tl P$, then we denote by $\tl P (\xi)$ the polytope $\tau (\tl P^{(\xi,1)}) \subset \R^n$, where $\tau: \R^n\oplus \R^1 \to \R^n$ is the projection onto the first summand.

Note that the following formula is not the one mentioned in the title of \cite{Kh99}.

\begin{lemma} (\cite[\S 4, Lemma 2]{Kh99}) \label{hovamv}
Let $P_1, \dots, P_n \subset \R^n$ be polytopes. Suppose $\Ps_i \subset \R^n$ are finite sets such that $\Conv(\Ps_i) = P_i$. Consider functions $\psi_i: \Ps_i \to \R^1$ and define $\tl P_i \subset \R^n \oplus \R^1$ as the convex hull of the set 
$\{(\mathbf{p}, y) \mid \mathbf{p} \in \Ps_i,  y \le \psi_i(\mathbf{p})\}$. Then:
		\begin{equation} \label{FormulaHova}
		\MV(P_1,\dots ,P_n) = \sum\limits_{\xi \in (\R^n)^*} \MV(\widetilde{P}_1(\xi), \dots, \widetilde{P}_n(\xi)).
		\end{equation}
\end{lemma}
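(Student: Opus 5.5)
The plan is to deduce formula \eqref{FormulaHova} from a volume identity for a single polytope by polarization. For nonnegative reals $\lambda_1,\dots,\lambda_n$, set $\tl P_\lambda = \lambda_1\tl P_1+\dots+\lambda_n\tl P_n \subset \R^n\oplus\R^1$; its image under $\tau$ is $P_\lambda=\lambda_1P_1+\dots+\lambda_nP_n$. The main steps are:
\begin{enumerate}
\item show that the upper faces of $\tl P_\lambda$ tile $P_\lambda$;
\item check that these faces split as Minkowski sums compatibly with the scalars $\lambda_i$;
\item compare the coefficients of the monomial $\lambda_1\cdots\lambda_n$ on both sides.
\end{enumerate}

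\textbf{Step 1: the tiling.} Each $\tl P_i$ is a polyhedron whose recession cone is the downward ray $\{0\}\times\R_{\le 0}$. The same holds for $\tl P_\lambda$ when all $\lambda_i>0$. Hence the upper boundary of $\tl P_\lambda$ is the graph of a concave piecewise-linear function on $P_\lambda$. Its maximal linear pieces are exactly the faces $\tl P_\lambda^{(\xi,1)}$ whose projection is $n$-dimensional, one for each of finitely many covectors $\xi$. Projecting by $\tau$, these cells cover $P_\lambda$ and have pairwise disjoint interiors. Therefore
$$\Vol_\Z(P_\lambda)=\sum_{\xi\in(\R^n)^*}\Vol_\Z\bigl(\tau(\tl P_\lambda^{(\xi,1)})\bigr).$$
The terms with lower-dimensional projection contribute zero, so the sum may be taken over all $\xi$.

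\textbf{Step 2: faces of the sum.} The support face of a Minkowski sum is the sum of the support faces. Multiplying by $\lambda_i>0$ does not change which face a covector selects. Since $\tau$ is linear, this gives $\tau(\tl P_\lambda^{(\xi,1)})=\sum_i\lambda_i\tl P_i(\xi)$. The crucial point is that the right-hand side depends on $\lambda$ only through the coefficients; the faces $\tl P_i(\xi)$ themselves are independent of $\lambda$. Hence, for all $\lambda$ with positive entries,
$$\Vol_\Z\Bigl(\sum_i\lambda_iP_i\Bigr)=\sum_\xi\Vol_\Z\Bigl(\sum_i\lambda_i\tl P_i(\xi)\Bigr).$$

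\textbf{Step 3: polarization.} Only finitely many tuples $(\tl P_1(\xi),\dots,\tl P_n(\xi))$ occur, since each $\tl P_i$ has finitely many faces. So the right-hand side is a finite sum of polynomials in $\lambda$, and the identity extends to all $\lambda\ge 0$. Expanding both sides via multilinearity (Definition~\ref{mv}) and comparing the coefficient of $\lambda_1\cdots\lambda_n$ gives $\MV(P_1,\dots,P_n)$ on the left and $\sum_\xi\MV(\tl P_1(\xi),\dots,\tl P_n(\xi))$ on the right, with the same multinomial normalization on both sides.

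One point needs care: a tuple whose Minkowski sum $\sum_i\tl P_i(\xi)$ is lower-dimensional contributes zero volume, and it must also contribute zero mixed volume. Otherwise the infinitely many non-generic $\xi$ could add spurious terms. This follows from Proposition~\ref{mv=0_prop}, taking $k=n$.

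The main obstacle I expect is Step 1. One must rigorously verify that the projections of the upper faces form a polyhedral subdivision of $P_\lambda$ with no overlaps or gaps. I would argue this via the concave function $g(x)=\max\{y:(x,y)\in\tl P_\lambda\}$ on $P_\lambda$. The key facts are that $\tl P_\lambda$ is the hypograph of $g$, and that each domain of linearity of $g$ is the projection of the face maximizing $(\xi,1)$, where $-\xi$ is the gradient of $g$ on that domain.
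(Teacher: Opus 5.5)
Your proof is correct, but it does not follow the paper. The paper gives no proof of Lemma~\ref{hovamv}. It quotes the lemma from \cite{Kh99}, where it is a special case of a statement about ``consistencies'' of partitions satisfying an inheritance condition. The accompanying remark only notes that the regular subdivision induced by a lifted polyhedron gives such a consistency.

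You instead prove the regular case directly. The upper faces of $\tl P_\lambda=\sum_i\lambda_i\tl P_i$ project onto a subdivision of $P_\lambda$, and support faces of Minkowski sums are sums of support faces. Hence $\Vol_\Z(\sum_i\lambda_iP_i)=\sum_\xi\Vol_\Z(\sum_i\lambda_i\tl P_i(\xi))$ for $\lambda>0$. Extracting the coefficient of $\lambda_1\cdots\lambda_n$, which is $n!$ times the mixed volume on both sides, gives the formula.

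The facts that make the polarization legitimate are all in your write-up, but one is justified loosely in Step 3. Finiteness of the set of tuples does not by itself make the right-hand side a finite sum, since a tuple can recur for infinitely many $\xi$; lower-dimensional tuples do exactly that. What closes the gap is that an $n$-dimensional cell determines its covector $\xi$ uniquely, as you say in Step 1. In addition, the set of such $\xi$ is independent of $\lambda>0$, because $\dim\sum_i\lambda_iQ_i=\dim\sum_iQ_i$. All other $\xi$ contribute zero by the easy direction of Proposition~\ref{mv=0_prop}.

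Your route gives a short, self-contained argument using only multilinearity and elementary facts about hypographs of concave piecewise-linear functions. It works verbatim for non-lattice polytopes, in line with Remark~\ref{conv_geon_non_lattice_rem}. Khovanskii's framework is more general, covering partitions that need not come from a single convex lifting. The paper never needs that generality: its only use of the lemma, with the polyhedra $\tl D_i$ in \S\ref{convgeom_pr_sec}, involves regular lifts.
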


\begin{Remark}
Although there are infinitely many covectors $\xi \in (\R^n)^*$, only finitely many terms in the right-hand side of equation (\ref{FormulaHova}) are nonzero. For instance, all nonzero terms correspond to dual rays to the facets of the Minkowski sum $\widetilde{P}_1 + \dots + \widetilde{P}_n$.
\end{Remark}

\begin{Remark}
In \cite[\S 4, Lemma 2]{Kh99}, this formula is given in a more general form. It is formulated in terms of so-called consistencies of partitions satisfying the condition of inheritance. Recall that a subdivision of a polytope $P$ given by the collection of polytopes $\{\widetilde{P}(\xi) \mid \xi \in (\mathbb{R}^n)^*\}$ for some polyhedron $\widetilde{P} \subset \mathbb{R}^n \oplus \mathbb{R}^1$ with $\tau(\widetilde{P}) = P$ is called a regular subdivision (or coherent subdivision in \cite{GKZ94}). In \cite{Kh99}, it is mentioned that regular subdivisions induce regular consistencies with the property of inheritance. This implies the version of the formula that we use in our paper. 
\end{Remark}

\begin{figure}[H]
		\begin{center}
	\includegraphics[scale=1]{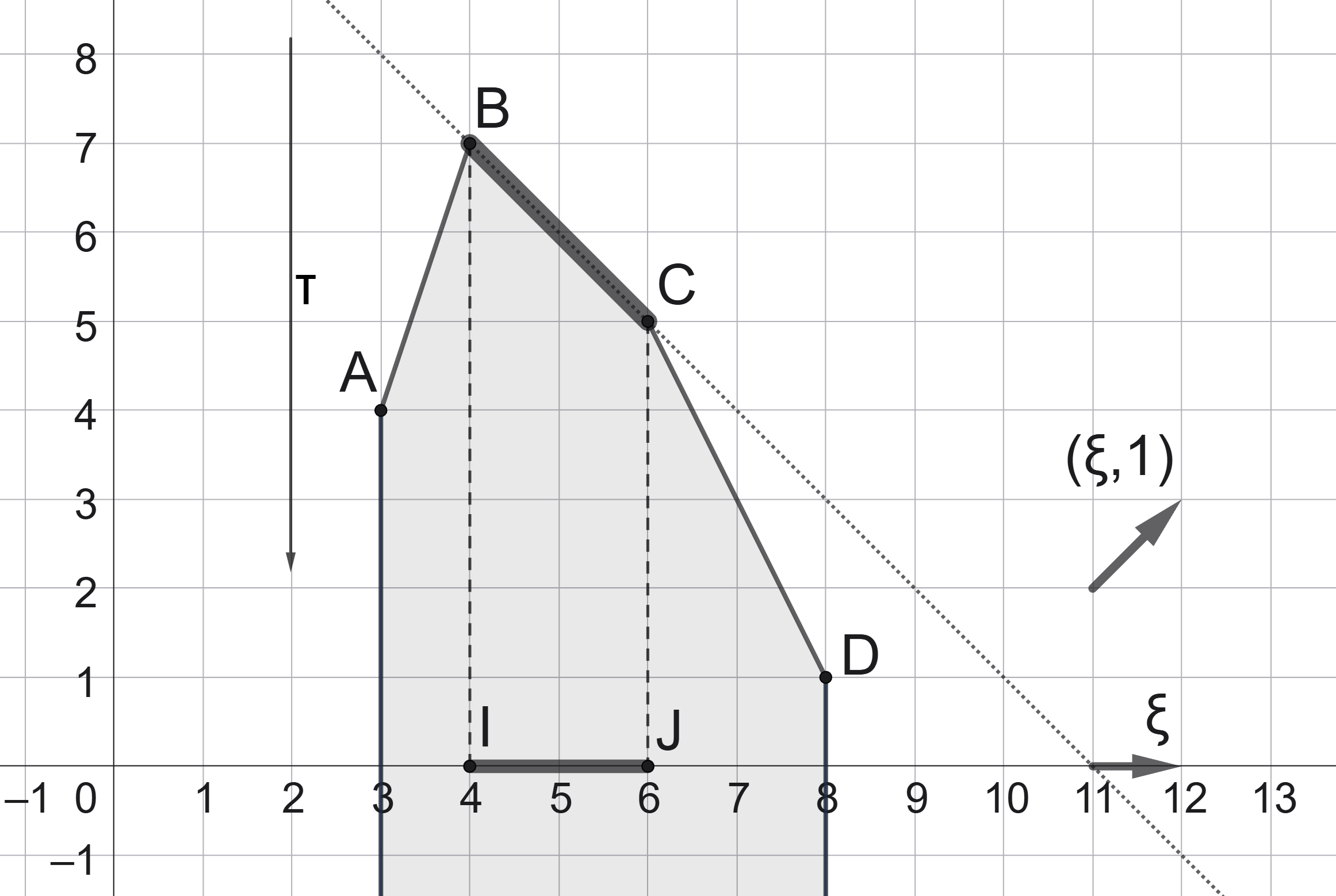}
		\end{center}
		\caption{
		\label{kh_formula_fig} Polyhedron $\tl P \subset \R^1 \oplus \R^1$ with vertices \{A,B,C,D\}, covector $\xi = (1)$, support face $BC = \tl P^{(\xi,1)}$, and segment $IJ = \tl P (\xi)$.}
\end{figure}

\subsection{Projective toric varieties} \label{pr_tor_subsec}

This subsection recalls basic facts about projective toric varieties; for a comprehensive introduction, see \cite{Ful}.

Let $\Ps = \{a_1,\dots,a_m\} \subset \Z^n$ be a finite lattice set. The corresponding \textbf{projective toric variety} $X_\Ps$ is defined as the closure of the image of the map:
$$ \phi_\Ps : (\C\setminus 0)^n \to  \mathbb P (\C^{|\Ps|}) \quad z \to (z^{a_1}:z^{a_2}:\dots :z^{a_m}).$$
The orbits of the torus action on $X_\Ps$ correspond to faces of $\Ps$ and have the same complex dimension as the corresponding face. 

\begin{sign}\label{p-p_lattice_sign}
Denote by $\langle \Ps - \Ps\rangle$ the lattice span by the set $\Ps - \Ps \stackrel{\mathclap{\normalfont\mbox{\normalfont\tiny def}}}{=} \{p_1-p_2 \mid  p_1, p_2 \in \Ps \}$.
\end{sign}

\begin{Remark} \label{orbits_lattices_rem}
Specifically, the orbit $T_{\Ps'}$ corresponding to a face $\Ps'$ of $\Ps$ is naturally isomorphic to the torus with monomial lattice $\langle\Ps' - \Ps'\rangle$ and with character lattice $(\Z^n)^* / \aff (\Ps')^\bot$, where $\aff (\Ps')^\bot$ consists of all covectors constant on $\Ps'$.
\end{Remark}

\begin{Remark}\label{f=0_is_hyperplane_rem}
Consider a finite set $\Ps \subset \Z^n$ and a Laurent polynomial $f = \sum_{\omega_i \in \Ps} a_i z^{\omega_i}$. The image $\phi_\Ps (\{f = 0\}) \subset \mathbb P (\C^{|\Ps|})$ of the hypersurface $\{f = 0\} \subset (\Cc)^n$ is the section of the open orbit $\phi_\Ps( (\Cc)^n)$ of the toric variety $X_\Ps \subset \mathbb P (\C^{|\Ps|})$ with the hyperplane $\sum_{\omega_i \in \Ps} a_i \alpha_i$, where $\alpha_i$ is the homogeneous coordinate of $\mathbb P (\C^{|\Ps|})$ corresponding to the monomial $z^{\omega_i}$.
\end{Remark}

\begin{definition}\label{index_def}
Let $\Ps \subset L$ be a finite lattice set in a lattice $L$, and suppose that $\aff(\Ps)$ contains $L$. The \textbf{index} $\ind_L(\Ps)$ is defined as the order $|L / \langle \Ps - \Ps\rangle|$ of the finite abelian quotient group $L / \langle \Ps - \Ps\rangle$.
\end{definition}

\begin{Remark}\label{isom_toric_rem}
Let $\Ps$ be a finite set in a lattice $L$. The projective toric variety $X_\Ps$ is independent of the choice of $L$ containing $\Ps$ and the choice of origin in $L$ (i.e., it is invariant under parallel translations of $\Ps$). In particular, every projective toric variety $X_\Ps$ is isomorphic to one defined by a \textbf{spanning set} (i.e., a set $\Ps \subset L$ such that $\langle \Ps - \Ps\rangle = L$).
\end{Remark}

\begin{Remark}\label{BK_toric_rem}
If the support of a polynomial $f$ is contained in $\Ps$, then $f$ defines a hyperplane section of $X_\Ps \subset \mathbb{P}(\C^{|\Ps|})$. The solutions of a system of $n$ linearly independent equations with supports in $\Ps$:
$$f_1 = \ldots = f_n = 0$$ 
in $(\Cc)^n$ correspond to the intersection of the open orbit of $X_\Ps$ with a projective subspace  $H \subset \mathbb{P}(\C^{|\Ps|})$ of codimension $n$ (see Remark \ref{f=0_is_hyperplane_rem}). The Kouchnirenko--Bernstein theorem (together with Remark \ref{isom_toric_rem}) implies that the number of intersection points $H \cap X_\Ps$ counted with multiplicities (i.e., the degree of $X_\Ps \subset \mathbb P(\C^m)$), equals $\frac{\Vol_\Z(P)} {\ind_{\Z^n} (\Ps)}$. The nondegeneracy conditions in the Bernstein–Kouchnirenko theorem are equivalent to the requirement that no solutions lie in toric orbits of positive codimension.
\end{Remark}

\subsection{Multiplicities of orbits of projective toric varieties} \label{mult_tor_var_subsec}

This subsection recalls, following \cite[\S 5.3]{GKZ94}, the multiplicities of projective toric varieties along their orbits, and, from \cite[\S 1.6]{Es08}, the corresponding nondegeneracy conditions for sections by projective subspaces of complementary dimension with respect to the orbits.

Let $F \subset \R^n$ be a set whose affine span $\aff(F)$ is a rational subspace (see Notation \ref{volumes_in_lattices_sign}). %Denote by $\pi_F$ the linear lattice projection $\R^n \to \R^n / \aff(F)$ with kernel $\aff(F)$.

\begin{definition}\label{c_def}
For lattice sets $\Fs, \Fs^\prime \subset \Z^n$, define the combinatorial coefficients $c_\Fs^{\Fs^\prime}$ as follows:
\begin{enumerate}
        \item $c_\Fs^{\Fs^\prime} = 0$, if $\Fs^\prime$ is not a face of $\Fs$.
        \item $c_\Fs^{\Fs^\prime} = \Vol_\Z(\pi_{\Fs'} (\Fs)) - \Vol_\Z(\pi_{\Fs'} (\Fs \setminus \Fs'))$, if $\Fs^\prime$ is a face of $\Fs$.
        \item $c_\Fs^{\Fs} = 1$  (the $0$-dimension volume of a point is $1$).
    \end{enumerate}
\end{definition}

See \cite[Examples 1.24 and 1.26]{Es08} for illustrations of the latter definition.

\begin{definition}\label{mult_def}
Let $Y \subset \mathbb{P}^m$ be a projective variety and $y \in Y$. Consider a generic projective subspace $H$ passing through $y$, such that $\dim (Y) + \dim (H) = m$. The number of intersection points in $H(t) \cap Y$ arising from the isolated intersection $y \in H \cap Y$ under a generic deformation $H(t)$ of $H$ is called the \textbf{multiplicity of $Y$ at $y$} and denoted by $\mult_y Y$. Let $Z$ be an irreducible subvariety of $Y$. The multiplicity $\mult_z Y$ for a generic point $z \in Z$ is called the \textbf{multiplicity of} $Y$ \textbf{along} $Z$ and is denoted by $\mult_Z Y$.
\end{definition}

\begin{Remark}
The multiplicity $\mult_y Y$ is always at least $1$, and $\mult_y Y = 1$ if and only if $y$ is a nonsingular point of $Y$.
\end{Remark}

\begin{theorem}(\cite[Theorem 3.16 on p.~187]{GKZ94} and Remark \ref{isom_toric_rem}) \label{toric_orb_mult_th}
Let $\Ps$ be a finite set in a lattice $L$ with $\dim(\Ps) = \rk(L)$, and let $\Ps'$ be a face of $\Ps$. Denote by $L'$ the lattice $L \cap \aff (\Ps')$.
The multiplicity of the projective toric variety $X_\Ps$ along the orbit $X_{\Ps'}$ is given by:
    $$\mult_{X_{\Ps'}} (X_{\Ps}) = \frac{\ind_{L'} \Ps'}{\ind_L \Ps} c_\Ps^{\Ps'}.$$
\end{theorem}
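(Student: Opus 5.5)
The plan is to reduce the global multiplicity to a local intersection count in a transversal slice to the orbit, and then evaluate that count with the Bernstein--Kouchnirenko theorem (Theorem~\ref{bk_th}) applied in the quotient lattice $L/L'$.

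First I would normalize using Remark~\ref{isom_toric_rem}. Replacing $L$ by $\langle \Ps-\Ps\rangle$ does not change $X_\Ps$, but it rescales lattice volumes. Every volume in $L/L'$ is multiplied by $\ind_L \Ps/\ind_{L'}\Ps'$ relative to the corresponding quotient of spanning lattices. This is exactly the index ratio in the statement, so it suffices to show that, when $\Ps$ is spanning, $\mult_{X_{\Ps'}} X_\Ps$ equals the combinatorial coefficient $c_\Ps^{\Ps'}$ computed in the appropriate quotient lattice. The delicate part of this step is the interplay between $L'$ and $\langle \Ps'-\Ps'\rangle$. The orbit $T_{\Ps'}$ has monomial lattice $\langle\Ps'-\Ps'\rangle$ by Remark~\ref{orbits_lattices_rem}, so a generic point of the orbit has $\ind_{L'}\Ps'$ preimages under the natural parametrization. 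I would track this covering degree explicitly, since it is where the numerator $\ind_{L'}\Ps'$ enters.

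Next I would set up the local structure along the orbit. Translate so that $0\in\Ps'$ and pick a splitting $L = L'\oplus M$ with $M\cong L/L'$. In the affine chart of $\mathbb P(\C^{|\Ps|})$ where the coordinate of $z^0$ is nonzero, a neighbourhood of a generic point $y\in T_{\Ps'}$ in $X_\Ps$ is, up to a finite cover, a product of a neighbourhood in the torus $T_{\Ps'}$ with the germ at its fixed point of the affine toric variety $Y$ cut out by the monomials in $\pi_{\Ps'}(\Ps)$. Since multiplicity is unchanged by taking a product with a smooth factor, $\mult_y X_\Ps = \mult_0 Y$.

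The main step, and the one I expect to be the real obstacle, is computing $\mult_0 Y$ via Definition~\ref{mult_def}. I would choose $k=\dim Y$ generic linear forms in the coordinates of $Y$ that vanish at $0$; these are Laurent polynomials on $(\Cc)^k$ whose support is $\pi_{\Ps'}(\Ps)\setminus\{0\}$. I would then deform them by adding generic constants $t\,c_i$. For $t\neq0$ the deformed system is nondegenerate with Newton polytope $\Conv(\pi_{\Ps'}(\Ps))$, so Theorem~\ref{bk_th} gives $\Vol_\Z(\pi_{\Ps'}(\Ps))$ solutions in the torus. As $t\to0$, the solutions that do not escape to the origin converge to solutions of the undeformed system with support $\pi_{\Ps'}(\Ps\setminus\Ps')$. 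After a limiting argument on the toric compactification, those solutions number $\Vol_\Z(\pi_{\Ps'}(\Ps\setminus\Ps'))$, again by Theorem~\ref{bk_th} and Remark~\ref{BK_toric_rem}. The difference is therefore the number of points converging to $0$, namely $c_\Ps^{\Ps'}$.

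The delicate point is justifying this count. I must show that no solution escapes to other boundary orbits of the compactification, and that the remaining solutions really are counted by the smaller polytope. I would handle this by invoking the nondegeneracy conditions for generic coefficients, using Corollary~\ref{gen_sys_no_sol_rem} to exclude solutions on faces not containing the origin.
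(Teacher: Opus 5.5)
The paper gives no proof of this statement. It quotes \cite[Theorem 3.16]{GKZ94} and uses Remark~\ref{isom_toric_rem} to change lattices, so your outline has to stand on its own. It has a genuine gap: the factor $\ind_{L'}\Ps'$ is announced but never produced, and the steps as written give the wrong number.

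Put $N=\langle\Ps-\Ps\rangle$ and, after translating so that $0\in\Ps'$, set $N'=N\cap\aff(\Ps')$. Quotient volumes satisfy $\Vol_{L/L'}=\frac{[L:N]}{[L':N']}\Vol_{N/N'}$, and
\[
\frac{[L:N]}{[L':N']}=\frac{\ind_L\Ps}{\ind_{L'}\Ps'}\cdot[N':\langle\Ps'-\Ps'\rangle].
\]
So the rescaling factor in your first paragraph is correct only when $\langle\Ps'-\Ps'\rangle$ is saturated in $N$. For spanning $\Ps$, what must be shown is $\mult_{X_{\Ps'}}X_\Ps=[N':\langle\Ps'-\Ps'\rangle]\cdot c_\Ps^{\Ps'}$, with volumes in $N/N'$, not $\mult=c_\Ps^{\Ps'}$. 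No honest quotient lattice can absorb this factor, because $N/\langle\Ps'-\Ps'\rangle$ then has torsion.

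Accordingly, your Step 2 equality $\mult_y X_\Ps=\mult_0 Y$ fails exactly when this index exceeds $1$. Take the spanning set $\Ps=\{(0,0),(2,0),(0,1),(1,1)\}\subset\Z^2$ and $\Ps'=\{(0,0),(2,0)\}$.
\begin{itemize}
\item Then $X_\Ps=\{x_0x_3^2=x_1x_2^2\}\subset\mathbb P^3$.
\item Along the orbit $\{x_2=x_3=0,\ x_0x_1\neq 0\}$ it consists of two smooth sheets $x_3=\pm\sqrt{x_1/x_0}\,x_2$, so the multiplicity is $2=\ind_{L'}\Ps'\cdot c_\Ps^{\Ps'}$.
\item But $\pi_{\Ps'}(\Ps)=\{0,1\}$, so $Y$ is a line and your count gives $1-0=1$.
\end{itemize}
You correctly note that a generic orbit point has $\ind_{L'}\Ps'$ preimages, but nothing in your argument turns this into a factor of the multiplicity. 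Your ``finite cover'' is not harmless: $X_\Ps$ is not normal, not even unibranch, along $T_{\Ps'}$.

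The missing ingredient is a branch count.
\begin{enumerate}
\item Localizing the chart $\{x_0\neq 0\}$ at the orbit gives $\operatorname{Spec}\C[\Z_{\ge 0}\Ps+\langle\Ps'-\Ps'\rangle]$.
\item Since $N'$ is saturated in $N$, the variety $\operatorname{Spec}\C[\Z_{\ge 0}\Ps+N']$ splits as $\operatorname{Spec}\C[N']\times Y$. It maps finitely and birationally onto the variety in step 1, and on closed orbits it restricts to the isogeny $\operatorname{Spec}\C[N']\to T_{\Ps'}$ of degree $[N':\langle\Ps'-\Ps'\rangle]$.
\item Near each preimage $y'$ of $y$, the upstairs coordinates are holomorphic functions of the downstairs ones. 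Use local inverses of the isogeny, which is a local biholomorphism, and $z^{m}=z^{a}z^{-n'}$ for $a=n'+m\in\Ps$. Hence the germ at $y'$ maps isomorphically onto a branch of $(X_\Ps,y)$, and birationality makes these branches distinct.
\item So $(X_\Ps,y)$ is a union of $[N':\langle\Ps'-\Ps'\rangle]$ branches, each isomorphic to $(\C^k,0)\times(Y,0)$. Here $Y$ is unibranch at $0$, since its normalization has a single fixed point.
\item Multiplicities of equidimensional branches add, so $\mult_y X_\Ps=[N':\langle\Ps'-\Ps'\rangle]\cdot\mult_0 Y$.
\end{enumerate}
Combined with your Step 3 and the volume identity above, this gives the theorem.

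Step 3 is sound in outline, with one addition. At $t=0$ you must also exclude limit points on orbits of faces $G\ni 0$, $G\neq\{0\}$, of $\pi_{\Ps'}(\Ps)$. Corollary~\ref{gen_sys_no_sol_rem} covers these too, since on such a face you have $\dim Y>\dim G$ generic equations.
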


\begin{Remark}\label{smooth}
     The toric variety $X_\Ps$ is smooth if and only if all multiplicities $\frac{\ind_{L'} \Ps'}{\ind_L \Ps} c_\Ps^{\Ps'}$ are equal to $1$. Equivalently, in the lattice affinely generated by $\Ps$, all coefficients $c_\Ps^{\Ps'}$ and indices $\ind_{L'} \Ps'$ are equal to $1$.
\end{Remark}

Using \cite[Lemma 1.28 and Definition 1.14]{Es08}, we can state explicit sufficient nondegeneracy conditions for the statement of Theorem \ref{toric_orb_mult_th} (see Lemma \ref{non_deg_cond_rem}). 

Let $\Ps'$ be a face of a finite set $\Ps \subset \Z^n$. Recall that, by Remark \ref{orbits_lattices_rem}, the lattice $\langle\Ps' - \Ps'\rangle$ from Notation \ref{p-p_lattice_sign} is the monomial lattice of the orbit $T_{\Ps'}$ of the toric variety $X_\Ps$.

%Recall that the intersection of the hyperplane $\sum\limits_{i=1}^m \alpha_i x_i = 0$ (with $\alpha_i \in (\C \setminus 0)$) with $X_\Ps$ is the closure of $\phi_\Ps (\{f = 0\})$. The intersection of the closure of $\{f = 0\}$ with the orbit $T_{\Ps'}$ corresponding to a face $\Ps'$ is given by the restriction $f_{\Ps'}$.

\begin{definition}\label{rest_face_def}
Let $f$ be a Laurent polynomial with support $\Ps\subset \Z^n$. Consider a face $\Ps' \subset \Ps$ and a point $\mathbf p' \in \Ps'$. Define the hypersurface $\{f|_{\downarrow \Ps'} = 0\} \subset T_{\Ps'}$ as $\{z ^{-\mathbf p' }f_{\Ps'} = 0\}$.
%The restriction of $\{f_{\Ps'} = 0\}$ to the orbit $T_{\Ps'}$ is denoted $\{f|_{\downarrow \Ps'} = 0\}$. This is well-defined, since shifting by an element of the subtorus with character lattice $\aff(\Ps')^*$ multiplies $f_{\Ps'}$ by a nonzero constant.
\end{definition}

\begin{Remark}
The hypersurface in the above definition is well-defined, since a different choice of $\mathbf p'$ multiplies the polynomial $z ^{-\mathbf p' }f_{\Ps'}$ by a monomial in the monomial lattice of $T_{\Ps'}$. 

If $f_{\Ps'} \not\equiv 0$, then $\{f|_{\downarrow \Ps'} = 0\} \subset T_{\Ps'}$ coincides with the intersection of the closure of $\phi_\Ps (\{f = 0\})$ with the corresponding orbit $T_{\Ps'}$.
\end{Remark}

\begin{definition}\label{substitution_def}
Let $f$ be a polynomial with support $\Ps \subset \Z^n$, and let $T \subset (\Cc)^n$ be a subtorus with character lattice $L \subset (\Z^n)^*$. For $t \in (\Cc)^n / T$, the \textbf{substitution} $f(t,\cdot)$ is the restriction of $f$ to $tT$. Note that $f(t,\cdot)$ can be considered as a Laurent polynomial with support contained in $(\Ps / L^\bot) \subset (\Z^n / L^\bot)$.
\end{definition}

\begin{definition}\cite[Definition 1.14]{Es08} \label{non_deg_def}
Let $\Ps \subset \Z^n$, $P = \Conv(\Ps)$, and let $\mathbf{p}$ be a vertex of $\Ps$. Let $f_1,\dots, f_n$ be Laurent polynomials with supports in $\Ps \setminus \mathbf p$, and let $P_{-\mathbf p}$ be the polytope $\Conv(\Ps \setminus \mathbf p)$. The system 
$$f_1 = \ldots = f_n = 0$$
is said to be \textbf{nondegenerate at} $\mathbf p$ if for every $\xi \in (\R^n)^*: P^\xi = \mathbf p$, the system 
$$f_1|_{P^\xi_{-\mathbf p}} = \ldots = f_n|_{P^\xi_{-\mathbf p}} = 0$$
has no solutions in the torus $(\Cc)^n$.
\end{definition}

\begin{lemma}\cite[Lemma 1.28]{Es08} \label{non_deg_cond_rem}
Let $\Ps \subset \Z^n$ with $\dim \Ps = n$, and let $\Ps'$ be a face of $\Ps$ with $\dim \Ps' = k$. Consider the toric variety $X_\Ps$ and its orbit $T_{\Ps'}$. Let
\begin{equation}\label{non_deg_check_eqq}
    f_1 = \ldots = f_{k} = f_{k+1} = \ldots = f_{n} = 0
\end{equation}
be a system of linearly independent equations such that the supports of $f_1, \dots, f_k$ are contained in $\Ps$, and the supports of $f_{k+1}, \dots, f_n$ are contained in $\Ps \setminus \Ps'$. %Recall that the $k$-dimensional torus $T_{\Ps'}$ in $X_{\Ps'}$ has character lattice $(\Z^n)^* / \aff (\Ps')^\bot$.
The codimension $n$ projective subspace in $\mathbb P (\C^{|\Ps|})$ corresponding to the system (\ref{non_deg_check_eqq}) intersects $X_{\Ps'} \subset \mathbb P (\C^{|\Ps|})$ generically at $z^{(k)} \in T_{\Ps'}$ (in the sense of Definition \ref{mult_def}) if:
\begin{enumerate}
    \item The system $f_1|_{\downarrow\Ps'} = \ldots = f_{k}|_{\downarrow\Ps'} = 0$ has a nondegenerate solution $z^{(k)}$ in $T_{\Ps'}$.
    \item The system $f_{k+1} (z^{(k)}, \cdot) = \ldots = f_{n} (z^{(k)}, \cdot) = 0$ is nondegenerate at the vertex $\pi_{\Ps'} (\Ps')$.
\end{enumerate}    
\end{lemma}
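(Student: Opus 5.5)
The plan is to work in a neighbourhood of a point of the orbit $T_{\Ps'}$. There, $X_\Ps$ splits as a product of the orbit directions and a transversal affine toric "slice". Condition 1 will control the orbit directions and condition 2 the slice. Translating $\Ps$ (Remark~\ref{isom_toric_rem}), we may assume $\mathbf p' = 0 \in \Ps'$. Choose a covector $\xi$ with $\Ps^\xi = \Ps'$, and split the torus $(\Cc)^n = T' \times T''$. Here $T'$ has character lattice $(\Z^n)^*/\aff(\Ps')^\bot$ and is identified with the orbit $T_{\Ps'}$ (Remark~\ref{orbits_lattices_rem}). The complementary torus $T''$ has monomial lattice the quotient $\Z^n/\aff(\Ps')$.

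The local structure we aim for is the standard one for the affine chart attached to the face $\Ps'$. Near a point of $T_{\Ps'}$, $X_\Ps$ is (up to the finite quotient measured by $\ind_{L'}\Ps'$, which only scales multiplicities) isomorphic to $T_{\Ps'} \times Y$. Here $Y$ is the affine toric variety generated by the semigroup of $\pi_{\Ps'}(\Ps)$, with distinguished vertex $\pi_{\Ps'}(\Ps') = 0$, and $T_{\Ps'}\subset X_\Ps$ corresponds to $T_{\Ps'}\times\{0\}$.

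Next we handle the orbit directions using condition 1. In this chart each $f_j$ with $j\le k$ equals $f_j|_{\downarrow\Ps'}$ (a function on $T_{\Ps'}$) plus terms vanishing on $T_{\Ps'}\times\{0\}$, namely the monomials of $\Ps\setminus\Ps'$. By condition 1, $z^{(k)}$ is a nondegenerate zero of $f_1|_{\downarrow\Ps'},\dots,f_k|_{\downarrow\Ps'}$. The implicit function theorem therefore shows that, near $z^{(k)}\times Y$, the locus $\{f_1=\dots=f_k=0\}\cap X_\Ps$ is the graph of an analytic map $Y\to T_{\Ps'}$ sending $0\mapsto z^{(k)}$. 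The same holds for any small deformation of $f_1,\dots,f_k$. Consequently, the local intersection number at $z^{(k)}$, and its genericity, reduce to the intersection of $Y$ with the $n-k$ hypersurfaces $f_{k+1}=\dots=f_n=0$ pulled back along this graph. These $f_j$ have supports in $\Ps\setminus\Ps'$, so they vanish at the vertex of $Y$.

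For the slice, we use the graded structure of $Y$. The coordinate ring of $Y$ is graded by $\xi$ (the vertex $0$ being the unique $\xi$-maximum). The intersection at the vertex is generic in the sense of Definition~\ref{mult_def} (isolated, with deformation count equal to $\mult_0 Y$) if and only if the linear system of the leading forms meets the tangent cone of $Y$ at $0$ only at $0$. The leading forms are the lowest-order parts with respect to the orderings by covectors $\eta$ with $P^\eta = \pi(\Ps')$. The tangent cone of $Y$ at $0$ is the toric scheme determined by the faces of $P_{-\mathbf p}=\Conv(\pi(\Ps)\setminus 0)$ visible from $0$, that is, the faces $P^\eta_{-\mathbf p}$ with $P^\eta=\mathbf p$. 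In the product chart, $f_j$ for $j>k$ restricted to the slice is $f_j(z^{(k)},\cdot)$ up to higher-order corrections from the graph map, which do not change leading forms. So the absence of a nonzero common zero of the leading forms on every torus orbit of the tangent cone is exactly the statement that each system $f_j(z^{(k)},\cdot)|_{P^\eta_{-\mathbf p}}=0$ has no toric solutions, which is condition 2 (Definition~\ref{non_deg_def}).

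I expect the main obstacle to be this last identification: showing rigorously that the tangent cone of $Y$ at its vertex is covered by the toric pieces corresponding to the faces $P^\eta_{-\mathbf p}$, and that the corrections introduced by the graph map are of strictly higher $\eta$-order for every relevant $\eta$ simultaneously. I would try to handle both points together with a toric resolution, or a subdivision of the normal fan of $P_{-\mathbf p}$ near $\pi(\Ps')$. On each chart of that subdivision the leading-term argument becomes a single Bernstein-type nondegeneracy check (Theorem~\ref{bk_th}). Summing over these charts should then yield both the isolatedness of the intersection and the equality of the count with $\mult_0 Y$ as expressed via $c_\Ps^{\Ps'}$.
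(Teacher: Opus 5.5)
The paper gives no proof of this lemma; it is imported from \cite[Lemma 1.28]{Es08}. So your outline has to stand on its own, and it has a genuine gap exactly where the content of the lemma lies.

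\textbf{What works.} The skeleton is fine, and the implicit-function step using condition 1 is correct. The right tool is the classical criterion: if a linear space $H$ of complementary dimension meets $X_\Ps$ in an isolated point $z=z^{(k)}$, then $i(z;X_\Ps\cdot H)\ge\mult_z X_\Ps$, with equality iff $H\cap C_zX_\Ps=\{z\}$ (e.g.\ Fulton, \emph{Intersection Theory}, Cor.~12.4).

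\textbf{The main gap.} The description of the tangent cone $C_0Y$ is what turns this criterion into condition 2, and you only assert it, deferring it to a resolution/Bernstein argument. The surrounding discussion also targets the wrong objects.
\begin{itemize}
\item The ``leading forms'' in the criterion are $\mathfrak m$-adic initial forms. Since the $f_j$ are restrictions of ambient linear forms, these are just the linear forms themselves; no orderings by covectors $\eta$ enter.
\item Write $q=(q',\bar q)$ in a splitting $\Z^n=L'\oplus(\Z^n/L')$. The corrections from your graph map are sums of terms $(g(y)^{q'}-g(0)^{q'})\,y^{\bar q}$ with both factors in $\mathfrak m_Y$. They lie in $\mathfrak m_Y^2$ and are invisible, so there is no ``simultaneity'' issue.
\item You can even skip $g$: on each branch $C_zX_\Ps\cong T_zT_{\Ps'}\times C_0Y$, and condition 1 solves the first $k$ linear equations for the orbit component.
\item No formula for $\mult_0Y$ via $c_\Ps^{\Ps'}$ is needed, since only equality with $\mult$ is claimed.
\end{itemize}

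\textbf{The missing idea is an arc computation.} Put $\mathbf p=\pi_{\Ps'}(\Ps')=0$ and $A=\pi_{\Ps'}(\Ps)\setminus 0$. An arc of $Y$ through $0$, generically in the torus, is after reparametrization $s(\tau)=c(\tau)\tau^{w}$, with $c(0)$ in the torus and $\langle w,a\rangle>0$ on $A$. Let $m=\min_A\langle w,\cdot\rangle$ and $\Gamma=\{a\in A:\langle w,a\rangle=m\}$. Then $\tau^{-m}(s(\tau)^a)_{a\in A}$ tends to $(c(0)^a)_{a\in\Gamma}$ extended by zero. In the notation of Definition~\ref{non_deg_def} applied to $\pi_{\Ps'}(\Ps)$, we have $\Conv(\Gamma)=P_{-\mathbf p}^{-w}$ with $P^{-w}=\mathbf p$; these are exactly the faces occurring there. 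Conversely, $c\,\tau^{w}$ realizes every such vector, and arcs in smaller orbits only produce faces of the same kind. By curve selection, $C_0Y$ is set-theoretically the union over these $\Gamma$ of the closures of $\{(c^a)_{a\in\Gamma}\times 0\}$. On the $\Gamma$-piece, the linear form of $f_j$ ($j>k$) evaluates to $f_j(z^{(k)},\cdot)|_{\Gamma}(c)$. Hence $H\cap C_zX_\Ps=\{z\}$ is precisely condition 2.

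\textbf{A second error: the finite-index step.} The claim ``up to the finite quotient measured by $\ind_{L'}\Ps'$, which only scales multiplicities'' is wrong, and it hides a real condition.
\begin{itemize}
\item Your $T'$ is not the orbit: it covers $T_{\Ps'}$ with degree $\ind_{L'}\Ps'$.
\item $T'\times Y\to X_\Ps$ is a finite map, not a quotient.
\item The germ $(X_\Ps,z)$ is the union of the branches through the preimages of $z$, and $C_zX_\Ps$ is the union of their tangent cones.
\end{itemize}
So condition 2 must hold at every lift of $z^{(k)}$. For example, take $\Ps=\{(0,0),(2,0),(0,1),(1,1)\}$ and $\Ps'=\{(0,0),(2,0)\}$. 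In the coordinates $u=s^2$, $v=t$, $w=st$, $X_\Ps$ is the Whitney umbrella $w^2=uv^2$: two smooth sheets $w=\pm\sqrt u\,v$ along $T_{\Ps'}$, with $\mult=2$. Let $f_1$ be generic with $f_1|_{\Ps'}(\pm s_0)=0$, and let $f_2=(s_0+s)t$. Then $f_2(s_0,\cdot)=2s_0t$ is nondegenerate at the vertex. But $\{f_2=0\}$ is the tangent plane $w=-s_0v$ of the other sheet, and the local intersection number is $2+1=3$.

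So your argument proves the lemma only when condition 2 is read at all lifts of $z^{(k)}$. That is how the statement must be understood, and it is harmless for the generic systems in the paper's application.
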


\section{Semi-interlaced polytopes}\label{Semi-interlaced polytopes}

%\textcolor{red}{The main ingredients of the formula for the mixed volume of semi-interlaced polytopes are the combinatorial coefficients $c_S^{S^\prime}$ (see Definition \ref{c_def}) over some faces $S,S^\prime$ of $\Ps$ called \textit{sutures}. These coefficients (up to the index of certain sublattice) are equal to the multiplicity of the projective toric variety $X_{\Ps \cap S}$ along the stratum $X_{\Ps \cap S^\prime}$ (see \cite[\S 3F - Theorem 3.16]{GKZ94}). The same coefficients provide the relation between the secondary polytope and the Newton polytope of the $\As$-discriminant (\cite[\S 10B -- Theorem 1.2]{GKZ94}). The Euler obstructions of projective toric varieties are also described in terms of these coefficients (see \cite{MT08} and \cite{Es08}). We provide the formula with two proofs}

This section defines semi-interlaced polytopes and presents a formula for their mixed volume in Theorem \ref{semith}. The main result of this paper, Theorem \ref{semith}, is proved via two different approaches in \S \ref{proofs_sec}.

\begin{definition}\label{daughter}
Let $\Ps \subset \Z^n$ be a finite set with convex hull $P = \Conv(\Ps)$. Let $D$ be a non-empty polytope with vertices in $\Ps \subset \Z^n$. Denote by $\Df = \Df(D)$ the set of all maximal (with respect to inclusion) faces $F$ of $P$ such that $D \cap F = \emptyset$. The polytope $D$ is called a \textbf{daughter polytope} of $\Ps$ if:
\begin{enumerate}
\item Any two distinct faces in $\Df$ are disjoint.
\item $D = \Conv\left(\Ps \setminus \bigcup_{F \in \Df} F\right)$.
\end{enumerate}
        	
\begin{figure}[H]
		\begin{center}
	\includegraphics[scale=.5]{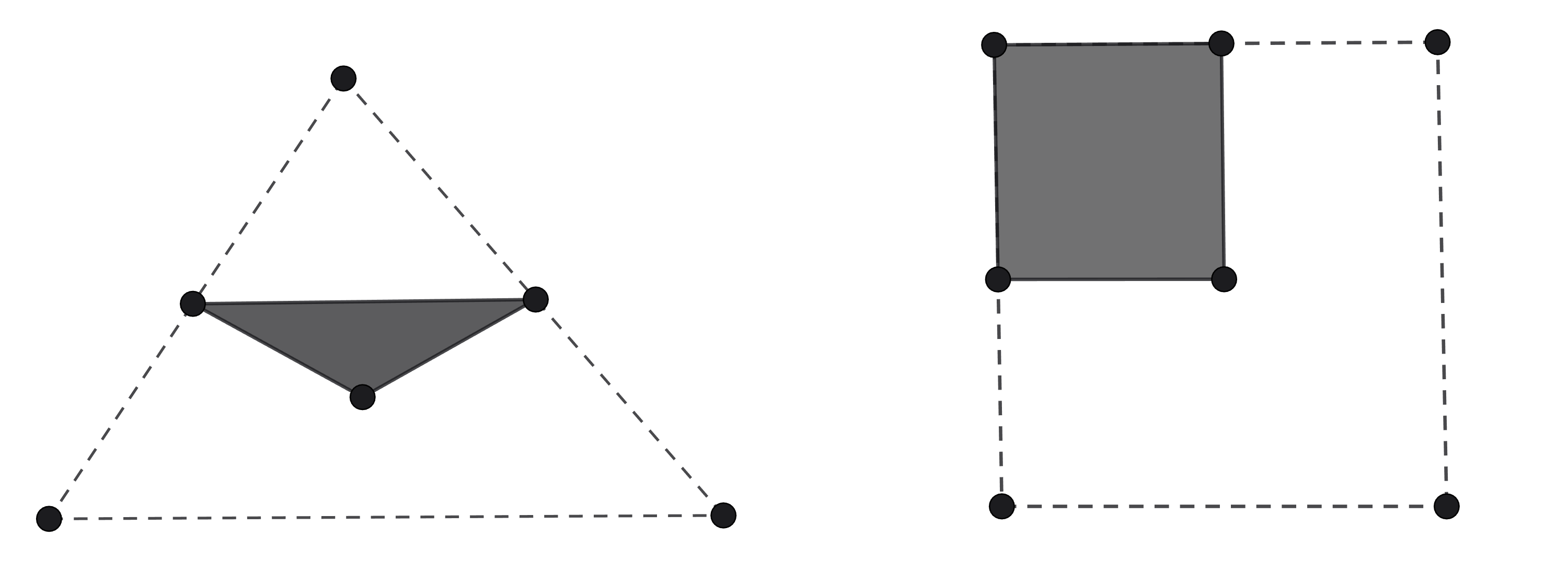}
		\end{center}
		\caption{
		\label{DaughterAndNot} Daughter (left) and non-daughter (right) polytopes.}
\end{figure}
\end{definition}

\begin{Remark}\label{face_rest_daughter_rem}
If $D$ is a daughter polytope of $\Ps$ and $F \subset P$ is a face with $F \notin \Df(D)$, then $D \cap F$ is a daughter polytope of $\Fs = F \cap \Ps$, where the corresponding set $\Df_F (D \cap F)$ consists of all non-empty intersections $R \cap F$ for $R \in \Df (D)$.
\end{Remark}
    
\begin{definition}\label{semi}
Let $\Ps \subset \Z^n$ be a finite set with convex hull $P = \Conv(\Ps)$. Daughter polytopes $D_1, \dots, D_n$ of $\Ps$ are called \textbf{semi-interlaced} in $\Ps$ if for every face $F$ of $P$, at least $\dim(F)$ of the polytopes $D_1, \dots, D_n$ intersect $F$. A face $F$ is called a \textbf{suture} of $P$ if exactly $\dim(F)$ of the polytopes intersect it. We denote the set of sutures by $\Theta(P)$.
\end{definition}

Note that $P$ itself is always a suture, but $P$ need not equal $\Conv(\bigcup_i D_i)$, as some vertices of $P$ may be sutures. Remark \ref{face_rest_daughter_rem} implies the following:
    
\begin{Remark}\label{semi_ind_rem}
For a suture $S \in \Theta (P)$, the polytopes $\{D_i\cap S| D_i\cap S \ne \emptyset\}$ are semi-interlaced in $(\Ps \cap S)$.
\end{Remark}

We do not need the following proposition for the main results of the paper, but we believe it provides some intuition concerning the sutures of semi-interlaced polytopes.

\begin{proposition}
Let $S_1, S_2 \in \Theta (P)$ be sutures such that $S_1 \cap S_2 \ne \emptyset$ and $\aff(S_1 \cap S_2) = \aff(S_1) \cap \aff(S_2)$. Then $S_1 \cap S_2$ and $P \cap \aff(S_1 \cup S_2)$ are also sutures.
\end{proposition}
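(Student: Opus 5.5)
The proof reduces everything to counting how many of the $D_i$ meet a face, and to the following basic fact, which I first prove. \emph{If $D$ is a daughter polytope and $F$ is a face of $P$, then $D \cap F = \emptyset$ if and only if $F$ is contained in some $R \in \Df(D)$.} The ``if'' direction is clear. For ``only if'', note that a face disjoint from $D$ lies in a maximal face disjoint from $D$, i.e. in some element of $\Df(D)$.

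For each $i$, let $N(F)$ denote the number of indices $i$ with $D_i \cap F \neq \emptyset$. Set $G = S_1 \cap S_2$ and $H = P \cap \aff(S_1 \cup S_2)$, and write $d_1 = \dim S_1$, $d_2 = \dim S_2$. The hypothesis $\aff(G) = \aff(S_1) \cap \aff(S_2)$ gives $\dim G = \dim(\aff S_1 \cap \aff S_2)$. Since $G \neq \emptyset$, the affine spans meet, so
\[
\dim \aff(S_1 \cup S_2) = d_1 + d_2 - \dim G .
\]
I also need $H$ to be a face of $P$. For this I would take $H$ to be the smallest face containing $S_1 \cup S_2$; if it is larger than $P\cap\aff(S_1\cup S_2)$, I work with it instead and the dimension bound below becomes an inequality in the right direction.

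The key step is inclusion--exclusion on the index sets. Let $I_j = \{i : D_i \cap S_j \neq \emptyset\}$, so $|I_j| = d_j$ because $S_j$ is a suture. I claim:
\begin{enumerate}
\item $\{i : D_i \cap G \neq \emptyset\} \subseteq I_1 \cap I_2$, which is trivial;
\item $\{i : D_i \cap H \neq \emptyset\} = I_1 \cup I_2$.
\end{enumerate}
For the second claim, the inclusion $\supseteq$ is trivial. For $\subseteq$, suppose $D_i$ misses both $S_1$ and $S_2$. Then $S_1 \subseteq R_1$ and $S_2 \subseteq R_2$ for some $R_1, R_2 \in \Df(D_i)$. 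Because $S_1 \cap S_2 \neq \emptyset$, the faces $R_1$ and $R_2$ intersect, so condition~1 of Definition~\ref{daughter} forces $R_1 = R_2 =: R$. Then $R$ is a face containing $S_1 \cup S_2$, hence it contains the smallest face $H$, and so $D_i \cap H = \emptyset$.

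Combining these claims gives
\[
N(G) + N(H) \le |I_1 \cap I_2| + |I_1 \cup I_2| = d_1 + d_2 = \dim G + \dim \aff(S_1 \cup S_2) \le \dim G + \dim H .
\]
Semi-interlacing gives $N(G) \ge \dim G$ and $N(H) \ge \dim H$. So every inequality in the chain is an equality, and in particular $N(G) = \dim G$ and $N(H) = \dim H$. Hence $G$ and $H$ are sutures. The same chain also shows $\dim H = \dim\aff(S_1\cup S_2)$, so $H$ equals $P \cap \aff(S_1\cup S_2)$, as the statement asserts.

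The main obstacle is the identification of $H$. I must show that $P \cap \aff(S_1 \cup S_2)$ is a face, or at least handle the case where it is not. Using the smallest face containing $S_1 \cup S_2$ resolves this, since the equality chain forces its dimension down to $\dim \aff(S_1 \cup S_2)$. A second point to check is that $G$ is nonempty and is a face, so that semi-interlacing applies to it; this holds because intersections of faces are faces.
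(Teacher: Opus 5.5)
Your proof is correct and follows the same route as the paper. The paper's sets $A, B, C, BC$ are your index sets for $S_1\cap S_2$, $S_1$, $S_2$ and $H$; the disjointness condition of Definition~\ref{daughter} gives $BC = B\cup C$, and the semi-interlacing bounds force equality in $|B|+|C|\ge |A|+|BC|$. Your use of the smallest face containing $S_1\cup S_2$, together with the resulting proof that it equals $P\cap\aff(S_1\cup S_2)$, usefully makes explicit something the paper leaves implicit: that $P\cap\aff(S_1\cup S_2)$ is a face, so semi-interlacing applies to it.
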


\begin{proof}
Suppose $\dim (S_1 \cap S_2) = a$, $\dim (S_1) = a+b$, and $\dim (S_2) = a + c$. Then $\dim (P \cap \aff(S_1 \cup S_2)) = a + b + c$. Denote by $A$, $B$, $C$ and $BC$ the subsets of the semi-interlaced polytopes consisting of polytopes that have non-empty intersections with $S_1 \cap S_2$, $S_1$, $S_2$, and $(P \cap \aff(S_1 \cup S_2))$, respectively. Note that $A \subset B \cap C$ and $B \cup C \subset BC$. By the definition of daughter polytopes, $BC = B \cup C$, since we remove a set of disjoint faces. This implies that $|B| + |C| \ge |A| + |BC|$.

By the definition of sutures, we have $|B| = a + b$ and $|C| = a + c$. By the definition of semi-interlaced polytopes, we have $|A| \ge a$ and $|BC| \ge a + b + c$. Thus we also obtain $|B| + |C| \le |A| + |BC|$, which implies that $|B| + |C| = |A| + |BC|$, and that both $S_1 \cap S_2$ and $(P \cap \aff(S_1 \cup S_2))$ are sutures.
\end{proof}

Not every collection of interlaced polytopes is semi-interlaced. For example, consider two copies of a tetrahedron and a segment from the apex to a non-vertex point of the base (see Figure \ref{tetra}). These polytopes are interlaced but not daughter polytopes of any set, hence not semi-interlaced.
	
\begin{figure}[H]
		\begin{center}
	\includegraphics[scale=.5]{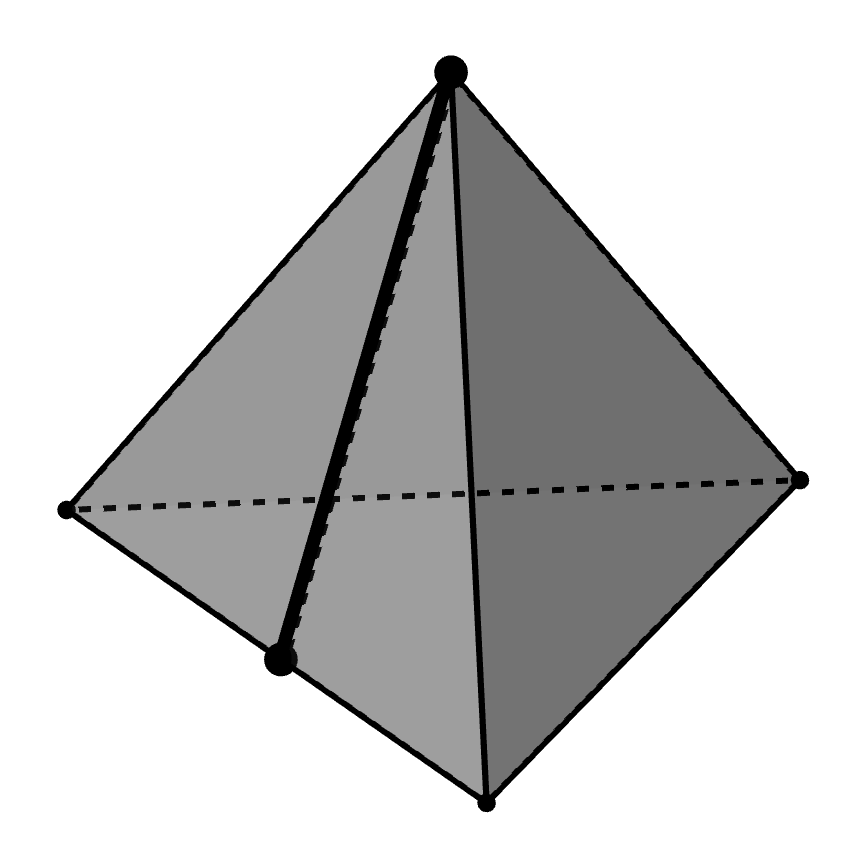}
		\end{center}
		\caption{
		\label{tetra}Interlaced but not semi-interlaced polytopes.}
\end{figure}

For a suture $S$, let $\mathfrak{v}^\dagger(S)$ denote the mixed volume $\MV(\{D_i \cap S \mid D_i \cap S \ne \emptyset\})$ (computed in $\aff(S)$), and let $\mathfrak{v}(S)$ denote the lattice volume of $S$ (see Notation \ref{lattice_vol_sign}). Denote by $\mathfrak{v}^\dagger$ and $\mathfrak{v}$ the vectors of mixed volumes and volumes, respectively, indexed by sutures.

Consider the square matrix with entries $c_{S\cap \Ps}^{S^\prime \cap \Ps}$ (see Definition \ref{c_def}), where $S$ and $S^\prime$ run over the set of sutures $\Theta(P)$. Note that this matrix is upper triangular with $1$'s on the diagonal when sutures are ordered by dimension; in particular, its determinant is $1$, and its inverse has integer entries. Denote the inverse matrix by $\mathfrak D$.
 
\begin{theorem}[Main theorem] \label{semith}
The mixed volumes of semi-interlaced polytopes satisfy:
\begin{equation} \label{semieq}
    \mathfrak v^\dagger = \mathfrak D \cdot \mathfrak v.
\end{equation}
In particular, the mixed volume $\MV(D_1, \dots, D_n)$ is the component $\mathfrak{v}^\dagger(P)$ corresponding to $P$.
\end{theorem}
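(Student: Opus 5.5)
Since $\mathfrak D$ is the inverse of the matrix $C=(c_{S\cap\Ps}^{S'\cap\Ps})_{S,S'\in\Theta(P)}$, the statement is equivalent to
\begin{equation*}
\mathfrak v(S)=\sum_{S'\in\Theta(P),\,S'\subseteq S} c_{S\cap\Ps}^{S'\cap\Ps}\,\mathfrak v^\dagger(S') \quad\text{for every suture } S.
\end{equation*}
By Remark \ref{semi_ind_rem}, the polytopes $D_i\cap S$ meeting $S$ are semi-interlaced in $\Ps\cap S$. The sutures of this restricted configuration are exactly the sutures of $P$ contained in $S$, since a face of $S$ is a face of $P$ and the count of $D_i$ meeting it is unchanged. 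Hence it suffices to prove the identity for $S=P$, and then apply it to every suture:
\begin{equation*}
\Vol_\Z(P)=\MV(D_1,\dots,D_n)+\sum_{S'\in\Theta(P),\,S'\ne P} c_{\Ps}^{S'\cap\Ps}\,\mathfrak v^\dagger(S').
\end{equation*}
Here $\mathfrak v^\dagger(S')=\MV(\{D_i\cap S'\})$ is computed in $\aff(S')$. Exactly $\dim S'$ of the $D_i$ meet $S'$, so this is a well-defined mixed volume of the right number of arguments.

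I will prove this identity by the algebro-geometric route (lattice case), following Remark \ref{BK_toric_rem}. Take generic $f_i$ with support $\Ps\cap D_i$ and consider the codimension-$n$ linear subspace $H\subset\mathbb P(\C^{|\Ps|})$ they cut out. The degree $\Vol_\Z(P)/\ind(\Ps)$ of $X_\Ps$ counts $H\cap X_\Ps$ with multiplicity. Points in the open orbit number $\MV(D_1,\dots,D_n)$ by Theorem \ref{bk_th}, up to the index normalization; I will work with a spanning set and reinstate indices at the end. A point of $H$ lies in the orbit $T_{\Ps'}$ of a face $F'=\Conv(\Ps')$ iff all $f_i|_{\downarrow\Ps'}$ vanish there, and $f_i|_{\downarrow\Ps'}\equiv 0$ exactly when $D_i\cap F'=\emptyset$.

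If more than $\dim F'$ of the $D_i$ meet $F'$, Corollary \ref{gen_sys_no_sol_rem} gives no solutions. If exactly $\dim F'$ meet it, i.e.\ $F'$ is a suture, there are $\mathfrak v^\dagger(F')$ solutions (divided by the index) in $T_{\Ps'}$. The semi-interlacing condition excludes the case of fewer than $\dim F'$. So $H\cap X_\Ps$ lies in the union of the orbits of sutures. By Theorem \ref{toric_orb_mult_th}, each such point counts with multiplicity $\frac{\ind_{L'}\Ps'}{\ind_L\Ps}c_\Ps^{\Ps'}$, provided the intersection is generic in the sense of Definition \ref{mult_def}. Multiplying through by $\ind_L\Ps$ cancels the index factors and gives the identity.

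The main obstacle is this genericity, which must be checked through Lemma \ref{non_deg_cond_rem}. For a suture $S'$ with $k=\dim S'$, the $k$ equations from the $D_i$ meeting $S'$ are generic on $T_{\Ps'}$, which gives condition 1. The remaining $n-k$ equations have supports in $D_i\cap\Ps$, which avoid $S'$ but are generally smaller than $\Ps\setminus\Ps'$. This is where the daughter hypothesis should enter. Since the removed faces $\Df(D_i)$ are pairwise disjoint, the face $R\in\Df(D_i)$ containing $S'$ is unique, and $D_i$ contains all points of $\Ps$ outside the removed faces. So near the vertex $\pi_{\Ps'}(\Ps')$, the polytope $\pi_{\Ps'}(D_i\cap\Ps)$ should agree with $\pi_{\Ps'}(\Ps\setminus\Ps')$ on every face $P^\xi_{-\mathbf p}$ relevant to Definition \ref{non_deg_def}. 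I would then verify condition 2 by induction on faces, again using Corollary \ref{gen_sys_no_sol_rem} together with semi-interlacedness on the faces of $P$ containing $S'$, and handle each possible position of $R$ separately.

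If this local analysis becomes too delicate, the fallback is the convex-geometric proof via Lemma \ref{hovamv}. There one would choose heights $\psi_i$ equal to $0$ on $D_i\cap\Ps$ and sufficiently negative on $\Ps\setminus D_i$, then show by induction on dimension that the resulting cells contribute $\mathfrak v^\dagger$ terms weighted exactly by the $c$-coefficients. That route also removes the lattice restriction.
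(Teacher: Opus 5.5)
Your outline is the paper's algebro-geometric proof. The reduction to the recursion \eqref{rec} for each suture via Remark \ref{semi_ind_rem} is the same, and so is the localization of $H\cap X_\Ps$ to orbits of sutures. The same holds for the count of $\mathfrak v^\dagger(S')$ solutions (up to the index) in each such orbit and the weighting by Theorem \ref{toric_orb_mult_th}. All of this is fine.

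The gap is the step you flag yourself: condition~2 of Lemma \ref{non_deg_cond_rem}. This is the whole combinatorial content of the theorem, and the mechanism you propose for it is false. It is not true that $\pi_{S'}(D_j\cap\Ps)$ agrees with $\pi_{S'}(\Ps\setminus S')$ on the faces $P^\xi_{-\mathbf p}$ of Definition \ref{non_deg_def}. The unique $W_j\in\Df(D_j)$ containing $S'$ is in general strictly larger than $S'$. Its lattice points project onto faces of $\pi_{S'}(P)$ through the vertex $\mathbf O_{S'}$, which is exactly the region you need.

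Here is a concrete counterexample. Take $\Ps=\{0,e_1,e_2,e_1+e_2\}\subset\Z^2$ with the off-coordinate polytopes $D_1=[e_1,e_1+e_2]$ and $D_2=[e_2,e_1+e_2]$, and the suture $S'=\{0\}$. The covector $\xi=(-1,-1)$ selects the face $[e_1,e_2]$ of $\Conv(\Ps\setminus 0)$. On that face, $f_1$ and $f_2$ have supports $\{e_1\}$ and $\{e_2\}$ only. So nondegeneracy cannot come from the supports being full; it has to come from \emph{enough} of them being nonempty.

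What you need is Corollary \ref{for_alg_non_deg_cor}. For every $\xi$ with $\pi_{S'}(P)^\xi=\mathbf O_{S'}$, set $\mathbf Y=(\Ps_{S'}\setminus\mathbf O_{S'})^\xi$; then at least $\dim\mathbf Y+1$ of the sets $\pi_{S'}(D_j\cap\Ps)$, $j\in J$, meet $\mathbf Y$. Given this, Corollary \ref{gen_sys_no_sol_rem} applies; the coefficients of $f_j(z^{(k)},\cdot)$ stay generic because $z^{(k)}$ is determined by the $f_i$ with $i\in I$. Proving the corollary takes two ideas that are missing from your plan.

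\begin{enumerate}
\item Lemma \ref{formal}: for $W'\in\Df(D_j)\setminus\{W_j\}$ one has $\pi_{S'}(W')\cap\Ps_{S'}=\emptyset$. Hence near the vertex the projected support is exactly $\Ps_{S'}\setminus(\mathbf O_{S'}\cup\pi_{S'}(W_j))$. This is where pairwise disjointness of $\Df(D_j)$ enters. The face $P^{\gamma_{W'}+\gamma_{S'}}$ misses $D_j$ yet meets both $W'$ and $W_j$, so it would lie in a single member of $\Df(D_j)$ meeting both, which is impossible.
\item A counting argument. Suppose $\mathbf Y\subseteq\pi_{S'}(W_j)$ for at least $|J|-\dim\mathbf Y$ indices $j$. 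Let $F$ be the minimal face of $\pi_{S'}(P)$ containing $\mathbf O_{S'}$ and $\mathbf Y$. Then $P\cap\pi_{S'}^{-1}(F)$ is a face of $P$ of dimension at least $\dim S'+\dim\mathbf Y+1$, and it lies in all those $W_j$. So it meets at most $\dim S'+\dim\mathbf Y$ of the $D_i$, contradicting semi-interlacedness.
\end{enumerate}

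Your fallback does not avoid this. The paper's convex-geometric proof uses precisely your lift: height $0$ on $D_i$ and $-1$ on the rest of $P$. Identifying the cell contributions with $c_\Ps^{\mathbf S}\,\mathfrak v^\dagger(S)$ is Lemma \ref{near}. That lemma rests on the same two ingredients, which show that the $\pi_S(\tl D_j)(\xi)$ are interlaced in each cell $\tl P_S(\xi)$.
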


\begin{Remark}\label{conv_geon_non_lattice_rem}
Although we work with lattice polytopes, the convex-geometric proof in \S \ref{convgeom_pr_sec} extends to non-rational polytopes without change. Note that not every convex polytope is combinatorially equivalent to a lattice polytope (see, e.g., \cite{Z08}).
\end{Remark}

\begin{Remark}\label{=Vol}
If $\Theta (P) = \{P\}$, then $\mathfrak{D}$ is the $1 \times 1$ identity matrix, and the daughter polytopes are interlaced and their mixed volume equals $\Vol_\Z(P)$, which is consistent with (\ref{semieq}).
\end{Remark}

\begin{Remark}\label{0_cci_rem}
The entries of $\mathfrak D$ are Euler obstructions of toric varieties (see \cite{MT08} for the original formulas and \cite[Section 1.5]{Es08} for our notations). A similar formula to (\ref{semieq}) computes the number of points in zero-dimensional critical complete intersections (also known as the $e$-Newton number); see \cite[Lemma 2.50]{Es18}, \cite[Theorem 1.8]{Es24} and \cite[Definition 5.20]{Sel25}.
\end{Remark}

\begin{Remark}\label{+-1_rem}
Let $S$ be a suture of $P$. Suppose the cone $\{\lambda x\mid x \in \pi_S(P), \lambda \in \R_{\ge 0}\}$ generated by $\pi_S(P)$ is simplicial, every face $S \subset F \subset P$ is a suture, and every coefficient $c_\Fs^{\mathbf S}$ equals $1$. Then the entry of $\mathfrak{D}$ corresponding to $(S, F)$ is $(-1)^{\dim F - \dim S}$.
\end{Remark}

\section{Proofs of the main theorem}\label{proofs_sec}

This section presents two proofs of Theorem \ref{semith}. 

The first proof employs convex geometry, relying on a formula of Khovanskii for the mixed volume (Lemma \ref{hovamv}) and the main Lemma \ref{near} (stated in \S \ref{lemma_defining_subsec} and proved in \S \ref{lem-subsec}). 

The second proof uses the algebraic geometry of projective toric varieties, based on the formula for multiplicities along orbits (Theorem \ref{toric_orb_mult_th}) and on Corollary \ref{for_alg_non_deg_cor}, which is obtained in the course of proving the main Lemma \ref{near}.

\subsection{The main lemma}\label{lemma_defining_subsec}

Let $\Ps \subset \Z^n$ be a finite set with convex hull $P = \Conv(\Ps)$. Let $D_1, \dots, D_n$ be semi-interlaced polytopes in $\Ps$, and let $\Df_1 = \Df(D_1), \dots, \Df_n = \Df(D_n)$ be the corresponding sets of faces of $P$.

For a suture $S \in \Theta (P)$, define a point $\mathbf{O}_S \in \pi_S(\Z^n)$ and a finite lattice set $\Ps_S \subset \pi_S(\Z^n)$ by:
\[
\O =  \pi_S(S) \quad , \quad \Ps_S =  \pi_S(\Ps) \cap \overline{\pi_S(P) \setminus \pi_S(\Conv(\Ps \setminus S))},
\]
where the overline denotes topological closure.

\begin{figure}[H]
\begin{center}
\includegraphics[scale=.5]{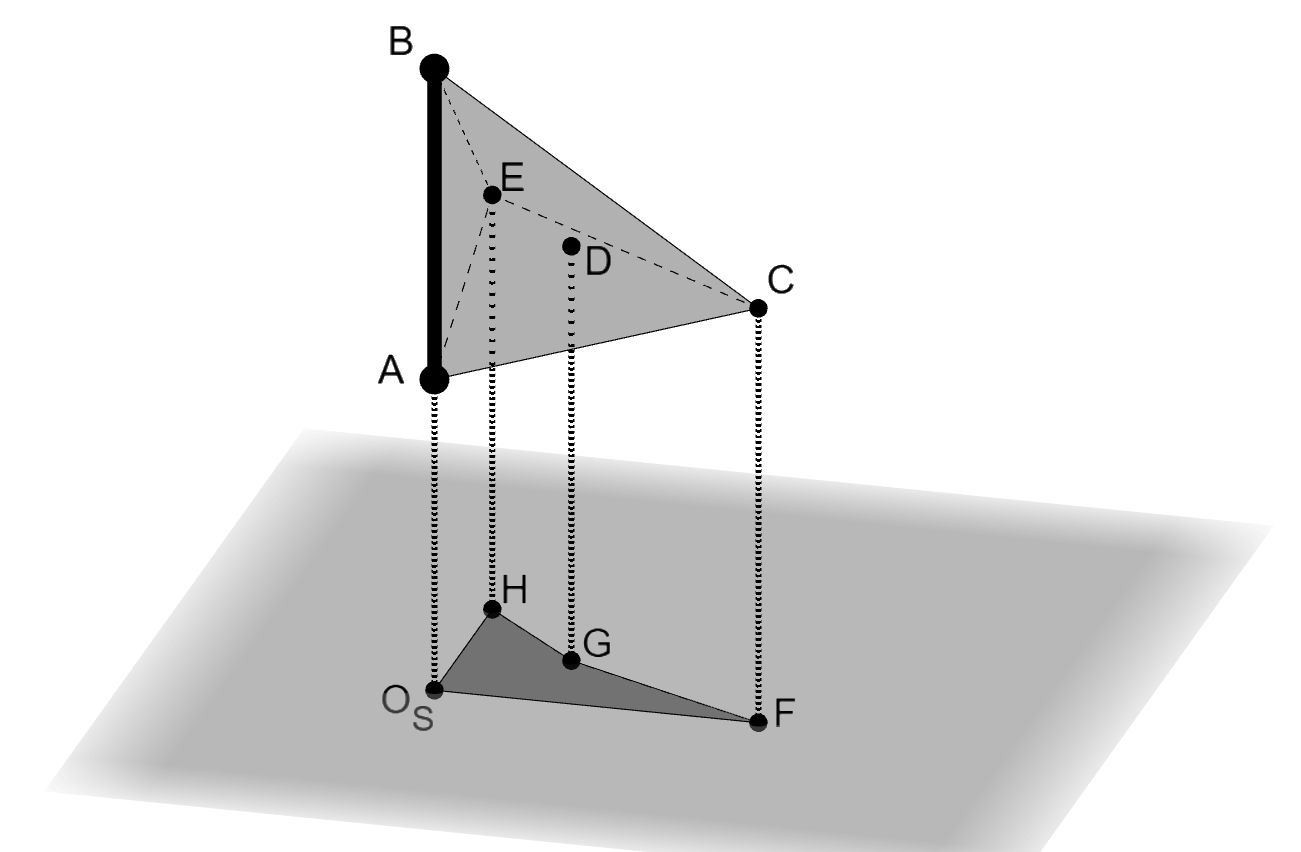}
\end{center}
\caption{
\label{O_S_sign_fig} Set $\Ps = \{A,B,C,D,E\}$, suture $AB$, non-convex polygon $\overline{\pi_S(P) \setminus \pi_S(\Conv(\Ps \setminus S))}$, and set $\Ps_S = \{\O, F,G,H\}$. 
}
\end{figure}

Define $\widetilde{P}_S \subset \pi_S(\R^n) \oplus \R^1$ as the convex hull of the following rays: 
$$\widetilde{P}_S = \Conv (\{(\mathbf a,y)\mid \ \mathbf a\in \Ps_S\setminus \O,\ y \le 0\} \cup  \{(\O,y), \ y \le -1\}).$$ 
Define $\tl D_i \subset \R^n \oplus \R^1$ as the convex hull of the following rays: 
$$\tl D_i = \Conv (\{(a,y)\mid \  a \in D_i ,\ y \le 0\} \cup  \{(a,y)\mid \  a \in P,\ y \le -1 \}).$$

The following lemma uses notation from \S \ref{Kh-formula_subsec}; its proof is given in \S \ref{lem-subsec}.

\begin{lemma}[Main lemma] \label{near}
Let $D_1, \dots, D_n$ be semi-interlaced polytopes in $\Ps$, and let $S \in \Theta (P)$ be a suture. Consider a covector $\xi \in (\pi_S(\R^n))^*$ such that $(\pi_S(P))^\xi = \mathbf O_S$.  By definition of a suture, the set $I = \{i \mid D_i \cap S \ne \emptyset\}$ has $\dim(S)$ elements. Let $J = \{j_1,\dots, j_{n - \dim(S)}\}$ be its complement. Then:
\begin{equation} \label{want1}
		\MV(\pi_S(\tl D_{j_1}) (\xi),\dots,\pi_S(\tl D_{j_{n-\dim(S)}})(\xi)) =  \Vol_\Z(\tl {P}_S(\xi)).
		\end{equation}
\end{lemma}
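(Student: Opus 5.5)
Write $\pi=\pi_S$ and translate so that $\mathbf O_S$ is the origin of $\pi_S(\R^n)$. Then $H(\xi;\pi(P))=0$, attained only at $\mathbf O_S$. The plan is to compute every polytope in (\ref{want1}) explicitly, show the $n-\dim S$ polytopes on the left are interlaced in $\tl P_S(\xi)$, and conclude by Proposition \ref{interlaced_prop}.

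\textbf{Step 1: explicit faces.} By construction $\pi(\tl D_j)$ is the convex hull of $\pi(D_j)\times\{y\le 0\}$ and $\pi(P)\times\{y\le -1\}$. Hence its support face in direction $(\xi,1)$ comes from comparing $h_j:=H(\xi;\pi(D_j))$ with $-1$, and $\pi(\tl D_j)(\xi)$ is:
\begin{itemize}
\item $\pi(D_j)^\xi$ if $h_j>-1$;
\item $\mathbf O_S$ if $h_j<-1$;
\item $\Conv(\pi(D_j)^\xi\cup\mathbf O_S)$ if $h_j=-1$.
\end{itemize}
In the same way, $\tl P_S(\xi)$ is one of $(\Ps_S\setminus\mathbf O_S)^\xi$, $\mathbf O_S$, or $\Conv((\Ps_S\setminus\mathbf O_S)^\xi\cup\mathbf O_S)$, according to how $h:=H(\xi;\Ps_S\setminus\mathbf O_S)$ compares with $-1$.

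\textbf{Step 2: matching $h_j$ with $h$.} For $j\in J$ we have $D_j\cap S=\emptyset$, so $S$ lies in a unique face $F_j\in\Df_j$. Since the faces of $\Df_j$ are pairwise disjoint, every other face of $\Df_j$ misses $S$. Its projection is therefore bounded away from $\mathbf O_S$ in the $\xi$-direction. Such faces contribute points of $\pi(\Ps)$ lying outside $\overline{\pi(P)\setminus\pi(\Conv(\Ps\setminus S))}$, or deeper than the first layer, so they do not affect the face in direction $\xi$ near $\mathbf O_S$.

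Using the definition $D_j=\Conv(\Ps\setminus\bigcup\Df_j)$, I want to show:
\begin{itemize}
\item $\pi(D_j)^\xi\subset \Conv(\Ps_S\setminus\mathbf O_S)^\xi$;
\item $h_j\le h$, with equality unless $\pi(F_j)$ meets the optimal layer.
\end{itemize}
The outcome should be $\pi(\tl D_j)(\xi)\subset\tl P_S(\xi)$ for all $j\in J$. The degenerate cases, where $\tl P_S(\xi)$ is a point or is lower-dimensional, are then handled directly: both sides of (\ref{want1}) vanish, or both equal $1$.

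\textbf{Step 3: interlacing.} Let $G$ be a proper face of $Q:=\tl P_S(\xi)$. I must show that at least $\dim G+1$ of the polytopes $\pi(\tl D_j)(\xi)$, $j\in J$, meet $G$. There are two cases.
\begin{itemize}
\item If $\mathbf O_S\in G$, then $G$ is the $\xi$-slice of a cone $\pi(F)$ for a face $F\supset S$ of $P$ with $\dim F=\dim S+\dim G+1$. The semi-interlaced condition gives at least $\dim F$ daughters meeting $F$. Exactly $\dim S$ of them lie in $I$, so at least $\dim G+1$ of them lie in $J$. Each such daughter meets the layer $\{\xi\ge -1\}$ inside $\pi(F)$, hence meets $G$.
\item If $\mathbf O_S\notin G$, then $G$ lies in the top layer $(\Ps_S\setminus\mathbf O_S)^\xi$. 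Here I use that $G$ is the projection of a face $F'$ of $P$ that does not contain $S$ but is adjacent to it. At most one face of each $\Df_j$ can block it, so by Remark \ref{face_rest_daughter_rem} the count again gains one.
\end{itemize}
With interlacing established, Proposition \ref{interlaced_prop} applied in $\pi(\R^n)$, of dimension $n-\dim S$, gives (\ref{want1}).

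\textbf{Main obstacle.} I expect the hardest part to be Step 2 combined with the second case of Step 3. There I have to control exactly which points of $\pi(\Ps)$ survive in $\Ps_S$: these are the points of the non-convex region $\overline{\pi(P)\setminus\pi(\Conv(\Ps\setminus S))}$. I then have to show that the daughter condition (disjoint removed faces) forces each $\pi(D_j)$ to contain all of them except those in $\pi(F_j)$. I would first try to prove this vertex by vertex, via the characterization of $\Ps_S$ as the projections of points of $\Ps$ lying on faces adjacent to $S$. This is the same local analysis that should later yield Corollary \ref{for_alg_non_deg_cor}.
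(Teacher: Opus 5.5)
Your plan for the main case $h=-1$ matches the paper: explicit faces, interlacing in $\tl P_S(\xi)=\Conv(\mathbf O_S\cup\mathbf Y)$, then Proposition \ref{interlaced_prop}. But the fact this rests on is left unproved, and the reason you give cannot work. Saying that the other faces of $\Df_j$ project ``away from $\mathbf O_S$'' distinguishes nothing, since $\mathbf Y$ itself lies away from $\mathbf O_S$. Knowing only that a point lies on some face adjacent to $S$ is not enough either.

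What is needed is Lemma \ref{formal}. Suppose $X\in\Ps\cap W'$ with $\pi_S(X)\in\Ps_S$, where $W'\in\Df_j$ is not the face $W_j\supset S$. Take $\gamma_S$ exposing $\mathbf O_S$ such that $\pi_S(X)$ maximizes $\gamma_S$ on $\pi_S(\Ps\setminus S)$, one unit below $\mathbf O_S$. Take $\gamma_{W'}$ exposing $W'$ with $S$ one unit below. Then $P^{\gamma_{W'}+\gamma_S}$ contains $X$, meets $S$, and has all its points of $\Ps$ in $S\cup W'$. So it misses $D_j$ and lies in a face of $\Df_j$ meeting both $W_j$ and $W'$, contradicting disjointness. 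This gives $\pi_S(\tl D_j)(\xi)=\Conv(\mathbf O_S\cup(\mathbf Y\setminus\pi_S(W_j)))$. In particular $h_j<h$ exactly when $\mathbf Y\subset\pi_S(W_j)$, not whenever $\pi_S(W_j)$ meets $\mathbf Y$.

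Step 3 also fails as written, because faces of $\tl P_S(\xi)$ need not come from faces of $P$. Take $\Ps=\{(0,0),(4,0),(0,4),(1,1)\}$, $D_1=D_2=\Conv(\Ps\setminus\{(0,0)\})$, $S=\{(0,0)\}$ and $\xi=-(x+3y)/4$. Then $\tl P_S(\xi)=\Conv\{(0,0),(1,1),(4,0)\}$, and its vertex $(1,1)$ is interior to $P$. So neither your ``slice of $\pi_S(F)$ with $\dim F=\dim S+\dim G+1$'' nor ``$G$ is the projection of a face $F'\not\supset S$'' holds. The correct count is short:
\begin{itemize}
\item If $\mathbf O_S\in G$, every polytope contains $\mathbf O_S$, since $h_j\le h=-1$.
\item If not, let $F$ be the minimal face of $\pi_S(P)$ containing $\mathbf O_S$ and $G$. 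Each $j$ whose polytope misses $G$ has $G\subset\pi_S(W_j)$, hence $F\subset\pi_S(W_j)$. So $D_j$ misses the face $P\cap\pi_S^{-1}(F)$, which contains $S$ and has dimension $\ge\dim S+\dim G+1$. Semi-interlacedness then leaves at most $|J|-\dim G-1$ such $j$.
\end{itemize}

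Second, the case $\tl P_S(\xi)=\Conv(\mathbf Y)$, i.e.\ $-1<h<0$ in your normalization, is not ``handled directly''. Your containment $\pi_S(\tl D_j)(\xi)\subset\tl P_S(\xi)$ is false there. By your own Step 1, an index with $h_j<-1$ gives the point $\mathbf O_S$, and one with $h_j=-1<h$ gives the pyramid $\Conv(\pi_S(D_j)^\xi\cup\mathbf O_S)$. Such indices occur in general, among those with $\mathbf Y\subset\pi_S(W_j)$. The right side of (\ref{want1}) is $0$. But faces at levels in $(-1,h)$ together with such pyramids can a priori have positive mixed volume, e.g.\ a segment parallel to $\ker\xi$ and a triangle with apex $\mathbf O_S$ in the plane.

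So the left side vanishes only through a second use of semi-interlacedness. At least $\dim\mathbf Y+1$ indices must have $h_j=h$, so that $\pi_S(\tl D_j)(\xi)=\pi_S(D_j)^\xi\subset\Conv(\mathbf Y)$; then Proposition \ref{mv=0_prop} applies. The paper gets this by rescaling to $c\xi$ with $c=-1/h>1$, so that $\tl P_S(c\xi)=\Conv(\mathbf O_S\cup\mathbf Y)$, and applying the main-case interlacing to the proper face $\Conv(\mathbf Y)$. This is exactly Corollary \ref{for_alg_non_deg_cor}. You postpone it until after the lemma, but it is needed inside the lemma's proof.
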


\subsection{Convex-geometrical proof of the main theorem}\label{convgeom_pr_sec}

This subsection proves formula (\ref{semieq}) from Theorem \ref{semith} using convex geometry. The argument relies on Lemma \ref{near}, which is proved separately in \S \ref{lem-subsec}.

We show that Theorem \ref{semith} reduces to Lemma \ref{near}. By Lemma \ref{hovamv}, we have:
	
\begin{equation}\label{le}
\mathfrak v (P) = \mathfrak v^\dagger (P) + \sum\limits_{\xi \in (\R^n)^* \setminus \{0\}} \MV(\tl D_1(\xi), \dots, \tl D_n(\xi)).
\end{equation}
	
Consider the sum on the right-hand side of (\ref{le}). For $\xi \in (\R^n)^* \setminus \{0\}$, if $P^\xi$ is not a suture, then there exist distinct indices $i_1, \dots, i_{\dim(P^\xi) + 1}$ such that $D_{i_1}\cap P^\xi \ne \emptyset, \dots, D_{i_{\dim P^\xi + 1}}\cap P^\xi \ne \emptyset$, and hence all $\widetilde{D}_{i_1}(\xi), \dots, \widetilde{D}_{i_{\dim(P^\xi) + 1}}(\xi)$ are contained in $P^\xi$. Thus, $\dim(P^\xi) + 1$ of the polytopes $\widetilde{D}_1(\xi), \dots, \widetilde{D}_n(\xi)$ lie in an affine space of dimension $\dim(P^\xi)$. By Proposition \ref{mv=0_prop}, their mixed volume is zero. Hence:
\begin{equation} \label{rasp}
	\sum\limits_{\xi \in (\R^n)^*\setminus\{0\}} \MV(\tl D_1(\xi), \dots, \tl D_n(\xi)) = \sum\limits_{S\in \Theta (P)\setminus\{P\}}\ \sum\limits_{\xi \in (\R^n)^*: P^\xi = S} \MV(\tl D_1(\xi), \dots, \tl D_n(\xi)).
\end{equation}  
Now let $S \ne P$ be a suture and $\xi \in (\R^n)^*$ with $P^\xi = S$. Recall the set that $I = \{i \mid D_i \cap S \ne \emptyset\}$ has $\dim(S)$ elements; denote them by $i_1,\dots, i_{\dim S}$. Note that for all $i\in I$ we have $\tl D_i(\xi) = D_i \cap S$. By Proposition \ref{mv=prod_rem}, the mixed volume factors as:
	\begin{equation}
	\MV(\tl D_1(\xi), \dots, \tl D_n(\xi)) = \MV(\tl D_{i_1}(\xi),\dots, \tl D_{i_{\dim(S)}}(\xi)) \cdot \MV(\pi_S(\tl D_{j_1}(\xi)),\dots, \pi_S(\tl D_{j_{n-\dim(S)}}(\xi))).
	\end{equation}
For a suture $S$ denote by $\mathbf S$ the set $S \cap \Ps$. The first factor is $\mathfrak{v}^\dagger(S)$ by definition, and the second is given by the main Lemma \ref{near}. Thus:
	\begin{equation} \label{no_dot}
	\sum\limits_{\xi \in (\R^n)^*: P^\xi = S} \MV(\tl D_1(\xi), \dots, \tl D_n(\xi)) = \mathfrak{v}^\dagger(S)  \sum\limits_{\xi \in (\R^n)^*: P^\xi = S} \Vol_\Z(\tl  P_S(\xi)) = \mathfrak{v}^\dagger (S) \cdot  c_\Ps^{\mathbf S}.
	\end{equation}
Substituting this into (\ref{le}) yields:
	
	\begin{equation} \label{rec}
	\mathfrak v (P) = \mathfrak v^\dagger (P) + \sum\limits_{S \in \Theta (P)\setminus \{P\}} \mathfrak v^\dagger (S) \cdot c_\Ps^{\mathbf S}.
	\end{equation}
For $S \in \Theta(P)$, let $\Theta(S)$ be the set of sutures contained in $S$. By Remark \ref{semi_ind_rem}, we obtain the system:
	\begin{equation} \label{recS}
	\mathfrak v (S)  = \mathfrak v^\dagger (S) + \sum\limits_{S^\prime \in \Theta(S) \setminus S} \mathfrak v^\dagger (S^\prime) \cdot c_{\mathbf S}^{\mathbf{S^\prime}}
	\tag{\ref{rec}$_S$}.
	\end{equation}
The system (\ref{recS}) for all $S \in \Theta (P)$ is equivalent to the matrix identity (\ref{semieq}). Therefore, Theorem \ref{semith} follows from the main Lemma \ref{near}.

\subsection{Proof of the main lemma} \label{lem-subsec}

To prove Lemma \ref{near}, we first establish an auxiliary result. 

\begin{lemma} \label{formal}
Let $D$ be a daughter polytope of $\Ps$ with $\Df = \Df(D)$. Consider a face $W \in \Df$ and a face $S \subset W$. Then for any other face $W' \in \Df \setminus \{W\}$, the projection $\pi_S(W')$ does not intersect $\Ps_S \subset \pi_S(\Z^n)$.
\end{lemma}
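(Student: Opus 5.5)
The plan is to argue by contradiction, using a separating functional in the quotient $\pi_S(\R^n)$. Set $Q = \pi_S(\Conv(\Ps \setminus S))$, so that $\pi_S(P) = \Conv(\O \cup Q)$ and $\Ps_S = \pi_S(\Ps) \cap \overline{\pi_S(P) \setminus Q}$.

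First I will show that $\pi_S(W') \subset Q$. Since $\Df$ consists of pairwise disjoint faces, $W' \cap W = \emptyset$, and so $W' \cap S = \emptyset$. Every vertex of the face $W'$ lies in $\Ps$, so every vertex lies in $\Ps \setminus S$. Hence $W' \subset \Conv(\Ps \setminus S)$ and $\pi_S(W') \subset Q$. It therefore suffices to show that no point of $\pi_S(W') \cap \pi_S(\Ps)$ lies in the closure of $\pi_S(P) \setminus Q$.

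Next I translate membership in that closure into a linear condition. A point $y \in Q$ lies in $\overline{\pi_S(P)\setminus Q}$ only if $y$ lies on a facet of $Q$ that is ``visible'' from $\O$. That is, there is a linear functional $\ell$ on $\pi_S(\R^n)$ and a constant $c$ with $\ell \le c$ on $Q$, $\ell(y) = c$, and $\ell(\O) > c$. Pulling back, $\eta = \ell \circ \pi_S$ satisfies:
\begin{itemize}
\item $\eta \le c$ on $\Ps \setminus S$;
\item $\eta = c$ at some $w \in W' \cap \Ps$ with $\pi_S(w) = y$;
\item $\eta > c$ on $S$, where $\eta$ is constant.
\end{itemize}

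Then I bring in a covector $\zeta$ exposing the face $W'$, so that $P^\zeta = W'$. I will consider the family $\eta_t = \eta + t\zeta$ and track its support face as $t$ grows. For $t=0$ the maximum over $P$ is attained exactly on $S \subset W$. For large $t$ it is attained on $W'$. At the transition value of $t$, the support face contains points of both $S$ and $W'$. The hyperplane inequalities above are designed to force this transition face to contain $S \cup \{w\}$.

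That transition face $F$ then meets both $W$ and $W'$. I want to derive a contradiction from this using the daughter conditions. Since $F$ meets $W$ and $W'$, $F$ is contained in no member of $\Df$. I would then like to show that $F$ meets $D$, and that $F \cap D$ behaves incompatibly with $W$ and $W'$ being distinct disjoint maximal faces avoiding $D$. Two tools are available here. One is $D = \Conv(\Ps \setminus \bigcup_{R\in\Df} R)$, which puts every point of $\Ps$ outside $\bigcup \Df$ into $D$. The other is Remark \ref{face_rest_daughter_rem} applied to $F$, which gives that $D \cap F$ is a daughter polytope of $F\cap\Ps$ whose removed faces include $W\cap F$ and $W'\cap F$.

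The main obstacle is this last step: turning the separating hyperplane into an honest face of $P$ meeting both $W$ and $W'$, and extracting the contradiction. The delicate case is when $W = S$, or more generally when $W$ has few points of $\Ps$ outside $S$. If the one-parameter family does not produce a clean transition face, I would try a different route: show directly that for $w \in W'$, the points $\pi_S((1-t)s + t w)$ for $s \in S$ and small $1-t>0$ lie in $Q$. This would use the points of $(\Ps \cap W)\setminus S$ together with disjointness of $W$ and $W'$, and would show that $y$ is not on a visible facet.
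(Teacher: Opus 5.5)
Your setup matches the paper's. Your $\eta=\ell\circ\pi_S$ plays the role of $\gamma_S$ and your $\zeta$ is $\gamma_{W'}$. The family $\eta+t\zeta$ does have a clean transition, at $t^*=(\eta(S)-c)/(\max_P\zeta-\max_S\zeta)>0$, where $\eta(S)$ is the constant value of $\eta$ on $S$. The paper's normalizations a3 and b2 just rescale so that $t^*=1$, and then take $F=P^{\gamma_{W'}+\gamma_S}$.

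The gap is the step you flag yourself: you never extract the contradiction. The property of $F$ you emphasize, that it \emph{contains} $S\cup\{w\}$, is also not the one that matters (in fact $F\cap S$ is only the face $S^\zeta$). The missing idea is an \emph{exclusion}: $F\cap\Ps\subset S\cup W'$. For $Y\in\Ps\setminus(S\cup W')$ you have $\eta(Y)\le c$ and $\zeta(Y)<\max_P\zeta$, so $\eta(Y)+t^*\zeta(Y)<c+t^*\max_P\zeta=\max_P(\eta+t^*\zeta)$.

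With this the argument closes as follows. Every vertex of $D=\Conv(\Ps\setminus\bigcup_{R\in\Df}R)$ lies in $\Ps\setminus\bigcup_{R\in\Df}R$. Since $F\cap D$ is a face of $D$, if it were nonempty it would contain such a vertex. That is impossible because $F\cap\Ps\subset W\cup W'$. Hence $F\cap D=\emptyset$, so $F$ lies in some maximal $R\in\Df$. This $R$ meets $W$ (through $S^\zeta$) and $W'$ (through $w$), so pairwise disjointness forces $W=R=W'$.

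This is also how your own outline would close: $F$ must meet $D$, but cannot. Equivalently, Remark \ref{face_rest_daughter_rem} would give $D\cap F\subset\Conv\bigl((F\cap\Ps)\setminus(W\cup W')\bigr)=\emptyset$, which is not a daughter polytope. Without the exclusion property, there is no incompatibility to be extracted from $W$ and $W'$ alone.

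Two smaller remarks. Your worry about the case $W=S$ is unfounded: the argument uses only $S\subset W$ and never points of $W\setminus S$. The fallback route you sketch is essentially a restatement of the lemma with no mechanism behind it.

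Also, you cannot in general choose $w\in W'\cap\Ps$ with $\pi_S(w)=y$, since the point of $\Ps$ over $y$ need not lie in $W'$. You do not need this, though. The vertices of $W'$ lie in $\Ps\setminus S$, so $\eta\le c$ on them, and $\eta=c$ at some point of $W'$ over $y$. Hence some vertex $w$ of $W'$ has $\eta(w)=c$, and that is all the argument uses.
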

	
\begin{proof}
Suppose, for contradiction, that there exists $W' \in \Df \setminus \{W\}$ such that $\pi_S(W')$ contains a point $X \in \Ps$ with $\pi_S(X) \in \Ps_S$. We show this forces $W = W'$.
    
If $\mathbf{O}_S \in \pi_S(W')$, then $W'$ intersects $S$, implying $W' = W$ by the definition of daughter polytopes.
		
If $\mathbf O_S \notin \pi_S(W^\prime)$, we construct a face $F$ of $D$ such that $F \cap W \ne \emptyset$, $F \cap W^\prime \ne \emptyset$, and  $F \cap D = \emptyset$.  This contradicts the definition of daughter polytopes unless $W = W'$.

Choose a covector $\gamma_S \in (\pi_S(\R^n))^*$ satisfying the following conditions (where $H(\gamma_S ;\bullet)$ is the support function from Definition \ref{support_face_def}):
		
\begin{enumerate}
	\item [a1.] $(\pi_S(P))^{\gamma_S} = \mathbf O_S$.
	\item [a2.] $\pi_S(X) \in \Conv(\pi_S(P) \setminus \mathbf O_S) ^ {\gamma_S}$.
	\item [a3.] $H (\gamma_S ; \pi_S(X)) - H (\gamma_S; \mathbf O_S) = - 1$.
\end{enumerate}		
Since $W'$ is a face of $P$ and $W' \cap S = \emptyset$ (by definition of daughter polytopes), there exists $\gamma_{W'} \in (\R^n)^*$ such that:
\begin{enumerate}
	\item [b1.] $P ^{\gamma_{W^\prime}} = W^\prime$.
	\item [b2.] $H (\gamma_{W^\prime}; S) - H (\gamma_{W^\prime} ; W^\prime) = - 1$.
\end{enumerate}
		
\begin{sign}\label{triv_prol}
Let $V_2 \subset V_1$ be affine spaces. Any covector $\gamma \in (V_1/V_2)^*$ extends to a covector in $V_1^*$ that vanishes on $V_2$. By slight abuse of notation, we denote this extension also by $\gamma$.
\end{sign}
	
Define $F = P^{\gamma_{W'} + \gamma_S}$. We show that $(\Ps \cap F) \subset W' \cup S$, with $W' \cap F \ne \emptyset$ and $S \cap F \ne \emptyset$, by proving:

\begin{enumerate}
	\item $ H(\gamma_{W^\prime} + \gamma_S; X ) = H (\gamma_{W^\prime} + \gamma_S; S)$.
			
	Indeed, $H(\gamma_{W^\prime} + \gamma_S; S) = H(\gamma_{W^\prime}; S)  + H(\gamma_S ; S)$ since $\gamma_S$ is constant on $S$. We have $H(\gamma_{W^\prime} + \gamma_S; X) = H(\gamma_{W^\prime}; X) + H(\gamma_S;X)$ since $X$ is a point. Thus, the equality follows from a3. and b2.
			
	\item For all $Y \in \Ps \setminus (S \cup W')$, we have $H(\gamma_{W'} + \gamma_S; Y) < H(\gamma_{W'} + \gamma_S; X)$.
    
	From a2., $H(\gamma_S; X) \ge H( \gamma_S ; Y)$ since $Y \notin S$. From b1., $H (\gamma_{W'}; X) > H(\gamma_{W'}; Y)$ since $Y \notin W'$. Thus, the inequality holds.
\end{enumerate}
		
Therefore, $F \cap D = \emptyset$ by definition of daughter polytopes, yet $F \cap W' \ne \emptyset$ and $F \cap S \ne \emptyset$ (so $F \cap W \ne \emptyset$). This implies $W' = W$, a contradiction.
		
\end{proof}
	
Now we prove the main lemma (Lemma \ref{near}).
	
\begin{proof} [Proof of Lemma \ref{near}]

Note that the face $\widetilde{P}_S(\xi)$ is either the point $\mathbf{O}_S$, the face $(\Conv(\Ps_S \setminus \mathbf{O}_S))^\xi$, or the convex hull of these two. If $\widetilde{P}_S(\xi) = \mathbf{O}_S$, then the lemma is trivial, since both sides of (\ref{want1}) vanish. Denote $\mathbf{Y} = (\Ps_S \setminus \mathbf{O}_S)^\xi$.

\begin{enumerate}
\item  Case  $\tl{P}_S (\xi) = \Conv(\O, \mathbf Y)$:
 
For each $j \in J$, let $W_j \in \Df_j$ be the face of $P$ containing $S$. By Lemma \ref{formal} we have: $$\pi_S(\widetilde{D}_j)(\xi) = \Conv(\mathbf{O}_S \cup (\mathbf{Y} \setminus \pi_S(W_j))).$$ 
We show that the polytopes $\pi_S(\widetilde{D}_j)(\xi)$ for $j \in J$ are interlaced in $\widetilde{P}_S(\xi)$.

Suppose, for contradiction, that they are not interlaced in $\widetilde{P}_S(\xi)$. Then there exists a face $K \subsetneq \widetilde{P}_S(\xi)$ such that the set $J^K = \{j \in J \mid \pi_S(\widetilde{D}_j)(\xi) \cap K = \emptyset\}$ satisfies $|J^K| \ge |J| - \dim(K)$.

%Значит, достаточно доказать, что с каждой гранью $K \subsetneq \tl \Ap_S^\xi$ пересекаются хотя бы $dim(K) + 1$ многогранников среди $p_S(\tl\Ap_{j})^\xi, j \in J$.

If $\mathbf{O}_S \in K$, then all $\pi_S(\widetilde{D}_j)(\xi)$ intersect $K$, a contradiction. So assume $\mathbf{O}_S \notin K$. Let $F$ be the minimal face of $\pi_S(P)$ containing $\mathbf{O}_S$ and $K$. Choose a covector $\gamma \in (\pi_S(\R^n))^*$ such that $\pi_S(P)^\gamma = F$. Extend $\gamma$ to $\R^n$ (see Notation \ref{triv_prol}).
			
For each $j \in J^K$, the face $P^\gamma$ is contained in $W_j$ (since $S \subset W_j$ and $F \subset \pi_S(W_j) $), so $P^\gamma \cap D_j = \emptyset$. Thus, at least $|J^K| \ge n - \dim(S) - \dim(K)$ daughter polytopes do not intersect $P^\gamma$, meaning at most $\dim(S) + \dim(K)$ intersect it. But $\dim(P^\gamma) \ge \dim(K) + \dim(S) + 1$, contradicting the semi-interlaced condition.
			
Hence, the polytopes are interlaced in $\widetilde{P}_S(\xi)$, and (\ref{want1}) follows from Proposition \ref{interlaced_prop}.

\item Case $\tl{P}_S (\xi) = \Conv (\mathbf Y)$:

Here $\Vol_\Z(\widetilde{P}_S(\xi)) = 0$. We show that at least $\dim(\mathbf{Y}) + 1$ of the polytopes $\pi_S(\widetilde{D}_j)(\xi)$ ($j \in J$) are contained in $\Conv(\mathbf{Y})$, implying both sides of (\ref{want1}) vanish by Proposition \ref{mv=0_prop}.

Choose $c \in (1, \infty)$ such that $\widetilde{P}_S(c\xi) = \Conv(\mathbf{O}_S \cup \mathbf{Y})$. By the previous case, there exists $J^c \subset J$ with $|J^c| \ge \dim(\mathbf{Y}) + 1$ such that for all $j \in J^c$, $\pi_S(\widetilde{D}_j)(c\xi) \cap \Conv(\mathbf{Y}) \ne \emptyset$. Hence, for all $j \in J^c$, we have $\pi_S(\widetilde{D}_j)(c\xi) = \Conv (\pi_S(\widetilde{D}_j)(\xi), \mathbf{O}_S)$ and $\pi_S(\widetilde{D}_j)(\xi) = \pi_S(D_j^\xi) \subset \Conv(\mathbf{Y})$.

\end{enumerate}

\end{proof}

The following corollary is implied by Case 2 of the proof of the main Lemma \ref{near}.

\begin{cor}\label{for_alg_non_deg_cor}

Let $D_1, \dots, D_n$ be semi-interlaced in $\Ps \subset \Z^n$, and let $S$ be a suture. For any $\xi \in (\pi_S(\R^n))^*$ with $\pi_S(P)^\xi = \mathbf{O}_S$, at least $\dim((\Ps_S \setminus \mathbf{O}_S)^\xi) + 1$ of the polytopes $\pi_S(D_{j_1}), \dots, \pi_S(D_{j_{n - \dim S}})$ intersect $(\Ps_S \setminus \mathbf{O}_S)^\xi$.

\end{cor}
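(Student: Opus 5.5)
We would obtain the corollary by running the argument of Case 2 in the proof of Lemma \ref{near} with the projections $\pi_S(D_j)$ in place of the faces $\pi_S(\tl D_j)(\xi)$. Set $\mathbf Y = (\Ps_S\setminus\O)^\xi$. The plan is to perturb $\xi$ to $c\xi$ for a suitable $c>1$ so that Case 1 applies, and then pull the conclusion back to $\xi$.

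First, the choice of $c$. Since $(\pi_S(P))^\xi=\O$, the value $H(\xi;\O)$ strictly exceeds $H(\xi;\mathbf a)$ for every $\mathbf a\in\Ps_S\setminus\O$. In $\tl P_S$ the point $\O$ is lowered by $1$, so the face $\tl P_S(t\xi)$ is $\O$ for large $t$ and is $\Conv(\mathbf Y)$ or smaller for small $t$. Hence there is a unique $c$ at which the values of $(c\xi,1)$ on $(\O,-1)$ and on $\mathbf Y\times\{0\}$ coincide. At this $c$ we have $\tl P_S(c\xi)=\Conv(\O\cup\mathbf Y)$, since $(\Ps_S\setminus\O)^{c\xi}=\mathbf Y$. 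If instead $\mathbf Y$ already ties with $\O$ at $\xi$ itself, we take $c=1$.

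Next, apply Case 1 at $c\xi$. The polytopes $\pi_S(\tl D_j)(c\xi)=\Conv(\O\cup(\mathbf Y\setminus\pi_S(W_j)))$, for $j\in J$, are interlaced in $\Conv(\O\cup\mathbf Y)$, a polytope of dimension $\dim\mathbf Y+1$. Apply the interlacing condition to the proper face $\Conv(\mathbf Y)$. It gives at least $\dim(\mathbf Y)+1$ indices $j$ with $\pi_S(\tl D_j)(c\xi)\cap\Conv(\mathbf Y)\neq\emptyset$, that is, with $\mathbf Y\not\subset\pi_S(W_j)$. For each such $j$ some point $\mathbf y\in\mathbf Y$ is not in $\pi_S(W_j)$. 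By Lemma \ref{formal}, $\mathbf y$ is also not in $\pi_S(W')$ for any other $W'\in\Df_j$. We still have to lift $\mathbf y$ to a point of $\Ps$ lying in $D_j$, that is, outside every face of $\Df_j$.

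This lifting is the main obstacle. A point $\mathbf y\in\Ps_S$ may have several preimages in $\Ps$, and we need one that avoids every $W\in\Df_j$. The plan is to take a preimage $X\in\Ps$ of $\mathbf y$ that is maximal for the extension of $\xi$ (Notation \ref{triv_prol}). If $X$ lay in some $W\in\Df_j$, then $\mathbf y\in\pi_S(W)$. This is impossible for $W=W_j$ by the choice of $\mathbf y$, and impossible for $W\neq W_j$ by Lemma \ref{formal}. By condition 2 of Definition \ref{daughter}, $X$ then lies in $D_j$, so $\mathbf y=\pi_S(X)\in\pi_S(D_j)\cap\mathbf Y$.

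This yields at least $\dim(\mathbf Y)+1$ indices $j\in J$ with $\pi_S(D_j)\cap(\Ps_S\setminus\O)^\xi\neq\emptyset$, which is the claim. The remaining point to check is that the boundary case $\tl P_S(\xi)=\Conv(\O\cup\mathbf Y)$ is covered by taking $c=1$; it is, since Case 1 then applies directly at $\xi$.
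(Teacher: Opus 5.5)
Your proof is correct and follows the paper's route: the corollary is read off from Case 2 of the proof of Lemma \ref{near}, where $\xi$ is rescaled to $c\xi$ so that $\tl P_S(c\xi)=\Conv(\O\cup\mathbf Y)$, and the interlacing from Case 1 is then applied to the face $\Conv(\mathbf Y)$. The only difference is the last step, where the paper reads off $\pi_S(D_j)^\xi\subset\Conv(\mathbf Y)$ directly from the face structure of $\pi_S(\tl D_j)$, while you lift a point of $\mathbf Y\setminus\pi_S(W_j)$ via Lemma \ref{formal}; your lifting is not a real obstacle, because any preimage of $\mathbf y$ works and the extended $\xi$ is constant on the fibres of $\pi_S$, so choosing a maximal preimage adds nothing.
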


\subsection{Algebro-geometric proof of the main theorem} \label{algem_pr_sec}

%Note that the coefficients $c_\bullet^\bullet$ are closely related to Euler obstructions of toric varieties (see \cite{MT08} and \cite{Es08}) and the Proposition \ref{orbit_mult} below follows from Lemma 1.28 from \cite{Es08}.

%\begin{proposition}\label{orbit_mult}
 %   If for a particular (not generic) system (\ref{basic}) with supports contained in $\Ps$ there is a root in an orbit $X_F$ corresponding to a face $F \subsetneq P$  then the root's multiplicity is greater or equal to $c_P^F$ and it is equal to $c_P^F$ for nondegenerate system.
%\end{proposition}

This subsection presents proofs of the mixed volume formulas for interlaced and semi-interlaced polytopes (Proposition \ref{interlaced_prop} and Theorem \ref{semith}) using projective toric varieties recalled in \S \ref{pr_tor_subsec} and \S \ref{mult_tor_var_subsec}. The proof for semi-interlaced polytopes relies on Corollary \ref{for_alg_non_deg_cor}.

\begin{proof}[Proof of the formula for interlaced polytopes (Proposition \ref{interlaced_prop}) via projective toric varieties] \ \\
Let $D_1, \dots, D_n$ be lattice polytopes interlaced in $P = \Conv(D_1 \cup \dots \cup D_n) \subset \R^n$, and let $\Ps$ be a lattice set such that $P = \Conv(\Ps)$ and all vertices of each $D_i$ lie in $\Ps$. Consider a generic system of Laurent polynomials with supports $D_1 \cap \Ps, \dots, D_n \cap \Ps$. These polynomials define hyperplane sections of the projective toric variety $X_\Ps$. Consider any proper face $F \subsetneq P$. By the definition of interlaced polytopes, at least $\dim(F) + 1$ of the polytopes $D_1, \dots, D_n$ intersect $F$; therefore Corollary \ref{gen_sys_no_sol_rem} implies that the system has no solutions in the open orbit of $X_{\Ps \cap F}$. Thus, there are no solutions on orbits of positive codimension, and all solutions lie in the open torus. This yields $\MV(D_1, \dots, D_n) = \Vol_\Z(P)$.
\end{proof}

\begin{proof}[Proof of the formula for semi-interlaced polytopes (Theorem \ref{semith}) via projective toric varieties] \ \\
Let $D_1, \dots, D_n$ be semi-interlaced polytopes in a finite set $\Ps \subset \Z^n$, and consider generic polynomials with supports $D_1 \cap \Ps, \dots, D_n \cap \Ps$. These polynomials define hyperplane sections of the projective toric variety $X_\Ps$.

For a face $\mathbf{S}$ of $\Ps$, by Corollary \ref{gen_sys_no_sol_rem}, the system can have solutions in the open orbit of $X_{\mathbf{S}}$ only if $S = \Conv(\mathbf{S})$ is a suture. The number of solutions in this open orbit, by Remark \ref{BK_toric_rem}, is $$\frac{1}{\ind_{\Z^n \cap \aff(\mathbf{S})}} \MV(\{D_i \cap S \mid D_i \cap S \ne \emptyset\}) = \frac{\mathfrak{v}^\dagger(S)}{\ind_{\Z^n \cap \aff(\mathbf{S})}(\mathbf{S})}.$$ By Corollary \ref{for_alg_non_deg_cor} and Corollary \ref{gen_sys_no_sol_rem}, all solutions in the open orbit of $X_{\mathbf{S}}$ are nondegenerate in the sense of Lemma \ref{non_deg_cond_rem}. Thus, by Theorem \ref{toric_orb_mult_th}, the total multiplicity of solutions in this orbit is $$\frac{\mathfrak{v}^\dagger(S) \cdot c_{\Ps}^{\mathbf{S}}}{\ind_{\Z^n}(\Ps)}.$$

By Remark \ref{BK_toric_rem}, the total number of solutions in $X_\Ps$, counted with multiplicities, is $\frac{\Vol_\Z(P)}{\ind_{\Z^n}(\Ps)}$. This yields equation (\ref{rec}) with both sides divided by $\ind_{\Z^n}(\Ps)$. Repeating this argument for all sutures $S$ gives the system (\ref{recS}), which implies the desired formula (\ref{semieq}) for the mixed volume of semi-interlaced polytopes.

\end{proof}

\section{Examples and applications of semi-interlaced polytopes}

In this section, we introduce off-coordinate polytopes as a special case of semi-interlaced polytopes. We then show that, in the Newton-nondegenerate setting, the mixed volume of off-coordinate polytopes appears in the global Kouchnirenko formula and determines several algebraic degrees. Motivated by Arnold's monotonicity problem, we study conditions under which the mixed volume of off-coordinate polytopes vanishes.

%In this section we introduce special cases of semi-interlaced polytopes, called off-coordinate polytopes. We show that in the Newton nondegenerate case the mixed volume of certain off-coordinate polytopes participates in the computation of the global Kouchnirenko formula and various algebraic degrees. Motivated by Arnold's monotonicity problem, we study conditions under which the mixed volume of off-coordinate polytopes vanishes.

%This section introduces a concrete family of semi-interlaced polytopes, called off-coordinate polytopes. We show that their mixed volume computes the Kouchnirenko formula for hypersurface singularities and various algebraic degrees in the Newton nondegenerate case. Motivated by Arnold's monotonicity problem, we study conditions under which the mixed volume of off-coordinate polytopes vanishes.

\subsection{Off-coordinate polytopes} \label{off_sec}

Our main examples of semi-interlaced polytopes are provided by off-coordinate polytopes.

\begin{definition}\label{V_face_def}
Let $P$ be a polytope and in a cone $C$. A face $F$ of $P$ is called a \textbf{V-face} if there exists a face $Q$ of $C$ such that $F \subset Q$ and $\dim F = \dim Q$.
\end{definition}

\begin{definition}[cf. Definition \ref{off_def}]\label{off_glob_def}
Let $C = C_\D \oplus \R^{n-m}$ , where $C_\D$ is a simplicial cone with $\dim C_\D = m$, and let $\Ps \subset C$ be a finite lattice set with $P = \Conv (\Ps)$. Denote by $E_1, \dots, E_m$ the facets of $C$. The \textbf{off-coordinate} polytopes are defined as:
$$D_i = \begin{cases}
        \Conv(\Ps \setminus E_i) &  1 \le i \le m,\\
        \Conv(\Ps) & m < i \le n.
\end{cases}$$
We denote the mixed volume of the off-coordinate polytopes by $\Voff_C(\Ps)$.
\end{definition}

\begin{Remark}\label{off_rem}
Each $D_i$ is indeed a daughter polytope of $\Ps$ (the corresponding set $\mathcal{D}_i$ is either a single face $P \cap E_i$ or empty). The number of off-coordinate polytopes intersecting a face $F \subset P$ equals the dimension of the minimal face of $C$ containing $F$. Therefore, the off-coordinate polytopes are indeed semi-interlaced, and the sutures are precisely the V-faces of $P$.    
\end{Remark}

\subsection{Global Kouchnirenko formula and negligible polytopes}

This subsection recalls the global Kouchnirenko formula and the notion of negligible polytopes in $\R^n_{\ge 0}$. We show that the Kouchnirenko formula equals the mixed volume of off-coordinate polytopes and that these polytopes provide an alternative method to verify the negligibility of so called $B_k$-polytopes.

\begin{definition} \label{Newton_def}
A lattice polytope $P \subset \R^n_{\ge 0}$ is called \textbf{convenient} if it contains the origin $O$ and all standard basis vectors. The \textbf{Newton number} of a convenient polytope $P$ is defined as the alternating sum
    $$\nu(P) = \sum_{E \subset \R^n} (-1)^{n-\dim E} \cdot \Vol_\Z(P \cap E), $$
taken over all coordinate subspaces $E$ of $\R^n$. A convenient polytope in $\R^n_{\ge 0}$ is called \textbf{negligible} if $\nu(P) = 0$.
\end{definition} 

We recall the global Kouchnirenko theorem:

\begin{theorem}(\cite{Kou76} and \cite[Proposition 3.3]{Bro88})
Let $f \in \C[z_1,\dots, z_n]$ be a polynomial with a convenient Newton polytope $P \subset \R^n_{\ge 0}$ and generic coefficients. Then the hypersurface $\{f=0\} \subset \C^n$ has the homotopy type of a wedge of $\nu(P)$ of $(n-1)$-dimensional spheres.
\end{theorem}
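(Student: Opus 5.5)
The plan follows Broughton's two-step argument: first show that a generic $f$ is \emph{tame}, so that every fiber $f^{-1}(t)$ is a bouquet of $(n-1)$-spheres whose number equals the total Milnor number $\mu(f)$; then compute $\mu(f)$ combinatorially and show it equals $\nu(P)$.

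\textbf{Step 1 (tameness and bouquet structure).} Tameness means there is a compact set $K\subset\C^n$ and $\varepsilon>0$ with $|\operatorname{grad} f|\ge\varepsilon$ outside $K$. I would show that a convenient $P$ together with Newton nondegeneracy at infinity (a generic condition) gives tameness. Argue by contradiction via the curve selection lemma: take a curve $z(s)\to\infty$ along which $\operatorname{grad} f\to 0$. Its leading exponents define a covector $\xi$ whose support face $P^\xi$ does not contain $O$. Convenience forces $P^\xi$ to be a face ``at infinity'' in the coordinate subspace $E$ of nonvanishing coordinates. The leading terms then produce a solution of $z_i\partial_i f^\xi=0$ in the torus of $E$, which contradicts genericity.

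For tame $f$, Broughton's argument then applies. All critical points lie in $K$. Morse theory or Lefschetz-type arguments on a large ball, together with the absence of critical points at infinity, give that $f^{-1}(t)$ is $(n-2)$-connected for every $t$. Being an $(n-1)$-dimensional Stein manifold, it therefore has the homotopy type of a bouquet of $(n-1)$-spheres, and the number of spheres is $\mu(f)=\dim\C[z]/(\partial_1 f,\dots,\partial_n f)$. For generic coefficients $0$ is a regular value, so this applies to $\{f=0\}$.

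\textbf{Step 2 (counting critical points).} I would stratify $\C^n$ by the tori $(\Cc)^E$ of the coordinate subspaces $E$ and count critical points of generic $f$ stratum by stratum. On $(\Cc)^E$ with $|E|=k$, the equations split into two groups:
\begin{itemize}
\item $z_i\partial_i f=0$ for $i\in E$, with support in $P\cap E$ with the origin removed;
\item $\partial_j f|_E=0$ for $j\notin E$, with support the points of $P$ at height one in $x_j$, projected into $E$.
\end{itemize}
For generic coefficients, nondegeneracy and BKK (Theorem~\ref{bk_th}) show that only $E=\R^n$ contributes, with $\MV$ of the polytopes $\Conv(P\setminus\{x_i=0\})$, i.e.\ $\Voff_{\R^n_{\ge0}}(P)$. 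For generic coefficients all critical points are nondegenerate, so $\mu(f)=\Voff(P)$.

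The off-coordinate polytopes are semi-interlaced (Remark~\ref{off_rem}), and their sutures are the V-faces, here the sections $P\cap E$. Theorem~\ref{semith} then expresses $\Voff(P)$ through $\mathfrak D\cdot\mathfrak v$. Convenience makes each relevant cone simplicial with all coefficients $c=1$, so Remark~\ref{+-1_rem} gives entries $(-1)^{n-\dim E}$, which is exactly the alternating sum $\nu(P)$.

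\textbf{Main obstacle.} The main obstacle is Step 1, namely the tameness and the passage from ``no critical points at infinity'' to the bouquet structure. The curve-selection argument must handle faces $P^\xi$ lying in coordinate subspaces, which is precisely where convenience is used. I would follow Broughton's Proposition 3.3 closely there.
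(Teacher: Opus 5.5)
Your outline is sound, but it is not the paper's route. The paper gives no proof of this theorem: it cites Kouchnirenko and Broughton, and only afterwards (Remark~\ref{nu=voff_rem}) uses Theorem~\ref{semith} to rewrite $\nu(P)$ as $\Voff_{\R^n_{\ge 0}}(\Ps)$. You run that remark backwards:
\begin{itemize}
\item Broughton's bouquet theorem gives a bouquet of $\mu(f)$ spheres;
\item a BKK count gives $\mu(f)=\Voff_{\R^n_{\ge0}}(P\cap\Z^n)$;
\item Theorem~\ref{semith} with Remark~\ref{+-1_rem} turns this into $\nu(P)$.
\end{itemize}
This is not circular, because Theorem~\ref{semith} is proved by pure convex geometry. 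It also explains where the off-coordinate polytopes come from: they are exactly the Newton polytopes of the critical-point system $z_1\partial_1 f=\dots=z_n\partial_n f=0$ in the torus. A shorter standard way to count the spheres, once the bouquet structure is known, uses Euler characteristics. Additivity over the tori $(\Cc)^E$, together with $\chi=(-1)^{\dim E-1}\Vol_\Z(P\cap E)$ for a nondegenerate hypersurface in $(\Cc)^E$, gives $\nu(P)$ directly, without $\mu$ or Theorem~\ref{semith}.

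However, you have put the difficulty in the wrong place. Step~1 is standard and citable; your curve-selection sketch is essentially Broughton's tameness argument. The step that actually needs proof is ``nondegeneracy and BKK show that only $E=\R^n$ contributes''.

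Theorem~\ref{bk_th} needs the system to be nondegenerate, and genericity guarantees that only when the coefficients of the different equations vary independently. In $z_i\partial_i f=\sum_\omega \omega_i a_\omega z^\omega$ the same $a_\omega$ appears in all $n$ equations. So nondegeneracy with respect to $D_i=\Conv(\Ps\setminus\{x_i=0\})$ must be checked by hand, face by face.
\begin{itemize}
\item \emph{Faces with $O\notin P^\xi$.} For every $i$ with $P^\xi\not\subset\{x_i=0\}$ one has $D_i^\xi=D_i\cap P^\xi$, and the restricted polynomial is $z_i\partial_i(f|_{P^\xi})$. The remaining $z_i\partial_i(f|_{P^\xi})$ vanish identically. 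The Euler relation $\sum_i\xi_iz_i\partial_i(f|_{P^\xi})=H(\xi;P)\,f|_{P^\xi}$, with $H(\xi;P)>0$, turns a common zero into a singular point of $\{f|_{P^\xi}=0\}$ in the torus. A generic $f$ has no such point.
\item \emph{Faces with $O\in P^\xi$.} Convenience forces $\xi\le 0$ and $P^\xi=P\cap E$ for a proper coordinate subspace $E$. For the coordinates of $E$ the restricted equations are $z_i\partial_i(f|_E)$. For each coordinate $x_j$ vanishing on $E$, the restricted equation is $z_j\cdot(\partial_j f)|_E$, whose constant term $a_{e_j}$ is independent of $f|_E$. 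A common zero would be one of the finitely many critical points of $f|_E$ in $(\Cc)^E$ at which $(\partial_jf)|_E$ vanishes, and a generic $a_{e_j}$ excludes this.
\end{itemize}
The same computation shows that $f$ has no critical points on the strata $(\Cc)^E$ with $E\ne\R^n$.

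This verification uses $e_j\in\operatorname{supp}f$, i.e.\ $\Ds_n\subset\Ps$, as in Remark~\ref{nu=voff_rem}, and your claim fails without it. Take $f=1+z_1+z_1^2+z_2^2$, or any generic polynomial with this support. It is convenient and nondegenerate, and its only critical point $(-1/2,0)$ lies on $\{z_2=0\}$. The off-coordinate mixed volume of its support is $\MV([e_1,2e_1],\{2e_2\})=0\neq 1=\nu(P)$. So your Step~2 is correct when ``generic'' means generic coefficients on all of $P\cap\Z^n$, but only once the face-by-face check above is supplied.
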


The Newton number can be expressed as the mixed volume of off-coordinate polytopes. Let $\Ds_n \subset \Z_{\ge 0}^n$ denote the set of $n+1$ vertices of the standard simplex in $\Z_{\ge 0}^n$.

\begin{Remark}\label{nu=voff_rem}
Let $P \subset \R^n_{\ge 0}$ be a convenient polytope, and let $\Ps$ be a lattice set such that $\Conv(\Ps) = P$ and $\Ds_n \subset \Ps$ (e.g., $\Ps = P \cap \Z^n$). By Theorem \ref{semith} and Remark \ref{+-1_rem}, the Newton number satisfies:
$$\nu(P) = \Voff_{\R^n_{\ge 0}} (\Ps).$$
\end{Remark}

An important case, particularly in the context of Arnold's monotonicity problem (\cite[1982-16]{Arn04}; see, e.g., \cite{Sel24} and \cite{Sel25} for a more up-to-date version), is when the Newton number vanishes. The classifications of such polytopes, called $\mathbf{B_k}$\textbf{-polytopes}, is given in \cite[Theorem 1.3]{Sel25}.

\begin{definition} \label{Cayley_sum_def}
    Let $k \le n$ be nonnegative integers. Let $e_1, \dots ,e_k$ be the standard basis of $\Z^k$. For finite sets $\Ps_0, \Ps_1,\dots, \Ps_k \subset \Z^{n-k}$, the \textbf{Cayley sum} is defined as
    $$\Ps_0 \ast \Ps_1 \ast \cdots \ast  \Ps_k = (\Ps_0 \times \{0\}) \cup (\Ps_1 \times \{e_1\}) \cup \dots \cup (\Ps_k \times  \{e_k\}).$$
    The Cayley sum of convex polytopes is defined as the convex hull of the corresponding shifts of polytopes.
\end{definition}

\begin{definition}\label{B_k_def}
Let $P_0, P_1, \dots, P_k \subset \R^{n-k}$ be lattice polytopes such that
$$\dim (P_1 + \dots + P_k) < k \quad \text{and} \quad \dim (P_0) = n-k.$$
Then the Cayley sum $P_0 \ast P_1 \ast \cdots \ast  P_k \subset  \R^{n-k} \oplus \R^k_{\ge 0}$ is called a \textbf{$\mathbf{B_k}$-polytope} based on $\R^k_{\ge 0}$.
\end{definition}

The following result is a special case of \cite[Theorem 1.3]{Sel25}:

\begin{theorem} (cf. \cite[Theorem 1.3]{Sel25})\label{neg_class_th}
A convenient polytope $P$ in $\R^n_{\ge 0}$ is negligible (i.e., $\nu(P) = 0$) if and only if it is a $B_k$-polytope based on a coordinate subspace $\R^k_{\ge 0} \subset \R^n_{\ge 0}$.
\end{theorem}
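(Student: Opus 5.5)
The plan is to derive the classification from the identity $\nu(P)=\Voff_{\R^n_{\ge 0}}(\Ps)$ of Remark \ref{nu=voff_rem} together with the vanishing criterion of Proposition \ref{mv=0_prop}. Take $\Ps=P\cap\Z^n$, so that $\Ds_n\subset\Ps$, and let $D_i=\Conv(\Ps\setminus\{x_i=0\})$. Then $\nu(P)=0$ holds if and only if there is a set $I\subset\{1,\dots,n\}$ with $|I|=k$ and $\dim\bigl(\sum_{i\in I}D_i\bigr)<k$. Both directions are then statements about the off-coordinate polytopes.

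\emph{If direction.} Let $P=P_0\ast P_1\ast\cdots\ast P_k\subset\R^{n-k}\oplus\R^k_{\ge 0}$ be a $B_k$-polytope, and let $I$ be the $k$ Cayley coordinates. A lattice point of $P$ with $x_i>0$ for some $i\in I$ satisfies $\sum_{i\in I}x_i\le 1$ and is integral, so it is $(q,e_i)$ with $q\in P_i$. Hence $D_i=P_i\times\{e_i\}$ for $i\in I$. The Minkowski sum $\sum_{i\in I}D_i$ is then a translate of $P_1+\dots+P_k$, which has dimension $<k$. Proposition \ref{mv=0_prop} gives $\Voff=0$, hence $\nu(P)=0$.

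\emph{Only if direction.} Suppose $\nu(P)=0$. Choose an index set $I$, $|I|=k$, with $\dim\bigl(\sum_{i\in I}D_i\bigr)<k$, taking $I$ minimal with respect to inclusion. Let $L$ be the linear span of the directions of $\sum_{i\in I}D_i$, so $\dim L\le k-1$. Since $P$ is convenient, $e_i\in D_i$ for $i\in I$, and every $D_i$ is contained in $\{x_i\ge 1\}$. The goal is to show
\[
\Ps\setminus\{x_I=0\}\subset\Bigl\{\sum_{i\in I}x_i=1\Bigr\},
\]
so that every such point has the form $(q,e_i)$ for some $i\in I$. Granting this:
\begin{itemize}
\item Writing $P_i$ for the slice of $P$ at $e_i$, we get $P=P_0\ast(P_i)_{i\in I}$ with $P_0=P\cap\{x_I=0\}$.
\item $\dim(\sum_{i\in I} P_i)=\dim\bigl(\sum_{i\in I} D_i\bigr)<k$.
\item $P_0$ contains the origin and the $e_j$ for $j\notin I$, so $\dim P_0=n-k$ automatically.
\end{itemize}
This is exactly the $B_k$ condition of Definition \ref{B_k_def}.

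\emph{Main obstacle.} The hard step is proving $\sum_{i\in I}x_i=1$ on the relevant lattice points. I would project to the coordinates $\R^I$ and argue as follows:
\begin{itemize}
\item The projections $\pi(D_i)$ contain $e_i$, lie in $\{x_i\ge 1\}\cap\R^I_{\ge 0}$, and span directions inside $\pi(L)$, where $\dim\pi(L)\le k-1$.
\item By minimality of $I$, every proper subfamily of $\{D_i\}_{i\in I}$ has Minkowski sum of full expected dimension. I would use this to show that $\pi(L)$ is exactly the $(k-1)$-dimensional space spanned by the differences $e_i-e_j$, i.e.\ $\pi(L)=\{\sum_{i\in I} x_i=0\}$.
\item Consequently each $\pi(D_i)$ lies in $e_i+\pi(L)=\{\sum_{i\in I} x_i=1\}$.
\end{itemize}
I expect the care to be needed in the middle step: ruling out that $\pi(L)$ is some other hyperplane, or that $L$ is degenerate in the non-$I$ directions while the $\pi(D_i)$ still vary in $\R^I$. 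If this direct argument stalls, I would fall back on a face-by-face induction using the suture structure. In that induction the V-faces are the sutures (Remark \ref{off_rem}), and Lemma \ref{near} would be applied on the coordinate face $\{x_I=0\}$.
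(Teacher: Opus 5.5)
Your ``if'' direction is correct and is exactly the paper's argument. For a Cayley polytope the lattice points with $x_I\neq 0$ are the points $(q,e_i)$, so $D_i=P_i\times\{e_i\}$ for $i\in I$. Proposition~\ref{mv=0_prop} then gives $\Voff=0$, and Remark~\ref{nu=voff_rem} gives $\nu(P)=0$. Your reduction of the converse to proving $\Ps\setminus\{x_I=0\}\subset\{\sum_{i\in I}x_i=1\}$ is also fine. But the converse is precisely the hard direction, and the paper does not prove it. It imports it from \cite{Sel25}, where it rests on dual defective sets and the Furukawa--Ito classification \cite{FI21}. Question~\ref{neg_comb_que} explicitly asks whether a proof via combinatorial properties of mixed volumes exists. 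Your proposal would answer that question, but the step carrying all the content is only announced (``I would use this to show\dots'', ``if this direct argument stalls\dots''). So the ``only if'' direction is not proved.

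Worse, the intermediate claim you aim for is false whenever $k\ge 2$. The direction space $L$ of $\sum_{i\in I}D_i$ is spanned by differences \emph{inside} each $D_i$, not by differences $e_i-e_j$ between different summands. In the situation you are trying to reach, $D_i=P_i\times\{e_i\}$, so $L\subset\R^{n-k}\times\{0\}$ and $\pi(L)=\{0\}$, not the hyperplane $\{\sum_{i\in I}x_i=0\}$.

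A concrete example is the prism $P=[0,1]\times\Delta_2\subset\R^3_{\ge 0}$, i.e.\ the $B_2$-polytope $[0,1]\ast[0,1]\ast[0,1]$:
\begin{itemize}
\item $\nu(P)=3-5+3-1=0$;
\item the minimal dependent set is $I=\{2,3\}$, with $D_2,D_3$ parallel unit segments in direction $e_1$;
\item $\pi(L)=0$.
\end{itemize}
What must actually be shown is $\pi(L)=0$, i.e.\ $\pi(D_i)=\{e_i\}$: no lattice point of $P$ with $x_i\ge 1$ ($i\in I$) has $x_i\ge 2$ or $x_j\ge 1$ for another $j\in I$. Your proposal gives no mechanism forcing this from minimality of $I$.

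The fallback does not help either. Lemma~\ref{near} computes the contribution of a suture to the mixed volume; it is not a vanishing criterion and says nothing about the shape of $P$ when $\Voff=0$. Note also that in Example~\ref{voff=0_not_bk}, $I=\{1,2,3\}$ is a minimal dependent set, yet $2e_1\in\Ps$ violates $\sum_{i\in I}x_i=1$. So any correct argument must use $e_i\in\Ps$ and integrality essentially, not just dimension counting on the $D_i$.
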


Let us verify it in one direction using the off-coordinate polytopes.

\begin{proof}[Proof: $B_k$-polytopes are negligible in $\R^n_{\ge 0}$]

Let $P = P_0 \ast \dots \ast P_k \subset \R^n_{\ge 0}$ be a convenient $B_k$-polytope. Define the finite set
$$\Ps_0 \ast \Ps_1 \ast \cdots \ast  \Ps_k \subset  \Z^n_{\ge 0},$$
where $\Ps_i = P_i \cap \Z^{n-k}$. By Remark \ref{nu=voff_rem}, we have 
$\nu(P) = \Voff_{\R^n_{\ge 0} } (\Ps)$.
Among the off-coordinate polytopes there are $P_1, \dots, P_k$, and since $\dim(P_1 + \dots + P_k) < k$, Proposition \ref{mv=0_prop} implies that the mixed volume vanishes. Hence, $\nu(P) = 0$.
    
\end{proof}

The proof in the converse direction in \cite{Sel25} uses a non-trivial technique involving dual defective sets, specifically the Furukawa–Ito classification \cite{FI21}.

\begin{Que}\label{neg_comb_que}
By Remark \ref{nu=voff_rem} and Theorem \ref{neg_class_th}, a convenient polytope $P \subset \R^n_{\ge 0}$ is a $B_k$-polytope if and only if $\Voff_{\R^n_{\ge 0}}(P \cap \Z^n) = 0$. Can this equivalence be proved using combinatorial properties of mixed volumes?
\end{Que}

\subsection{Off-coordinate polytopes with zero mixed volume} \label{voff=0_subsec}

In addressing Question \ref{neg_comb_que}, we are led to the following problem. %Example \ref{voff=0_not_bk} demonstrates that Question \ref{neg_comb_que} is nontrivial.

\begin{Que}\label{voff_2=0_que}
Characterize lattice sets $\Ps \subset \R^n_{\ge 0}$ with convenient convex hulls $P = \Conv(\Ps)$ for which $\Voff_{\R^n_{\ge 0}}(\Ps) = 0$.
\end{Que}

We present examples of sets $\Ps \subset \R^n_{\ge 0}$ with $\Voff_{\R^n_{\ge 0}}(\Ps) = 0$ that generalize $B_k$-polytopes.

\begin{ex}[Stretched $B_k$-sets]\label{Voff=0_bk}
   Let $k \le n$ be non-negative integers. Let $e_1, \dots ,e_k$ be the standard basis of $\Z^k$, and $a_1,\dots, a_k$ be positive integers. For finite lattice sets $\Ps_0, \Ps_1,\dots, \Ps_k \subset \Z^{n-k}$, the \textbf{stretched Cayley sum} is defined as:
   $$\Ps_0 \ast_{a_1} \Ps_1 \ast_{a_2} \cdots \ast_{a_k}  \Ps_k = (\Ps_0 \times \{0\}) \cup (\Ps_1 \times \{a_1 e_1\}) \cup \dots \cup (\Ps_k \times  \{a_k e_k\}).$$
    
\begin{definition}\label{stretched_bk_def}
Let $\Ps_0, \Ps_1, ..., \Ps_k \subset \Z^{n-k}$ be lattice sets such that
$$\dim (\Ps_1 + \dots + \Ps_k) < k \quad \text{and} \quad \dim (\Ps_0) = n-k.$$
Then the stretched Cayley sum $\Ps_0 \ast_{a_1} \Ps_1 \ast_{a_2} \cdots \ast_{a_k}  \Ps_k \subset \Z^k \oplus \Z^{n-k}$ is called a \textbf{stretched} $\mathbf{B_k}$\textbf{-set}.
\end{definition}
Note that if $\Ps \subset \R^n_{\ge 0}$ is a stretched $B_k$-set, then $\Voff_{\R^n_{\ge 0}}(\Ps) = 0$ by Proposition \ref{mv=0_prop}.\\
\end{ex}

The above construction does not provide a complete answer to Question \ref{voff_2=0_que}. Here is a simple example of a different nature.

\begin{ex}\label{voff=0_not_bk}
Consider $\Z^4$ with coordinates $e_1, e_2, e_3$, and $e_4$. Consider the set
$$\Ps = \{0, 2e_1,2e_2, e_1 + e_2, e_3\}\times  \{0,e_4\}.$$ 

\begin{figure}[H]
		\begin{center}
	\includegraphics[scale=.4]{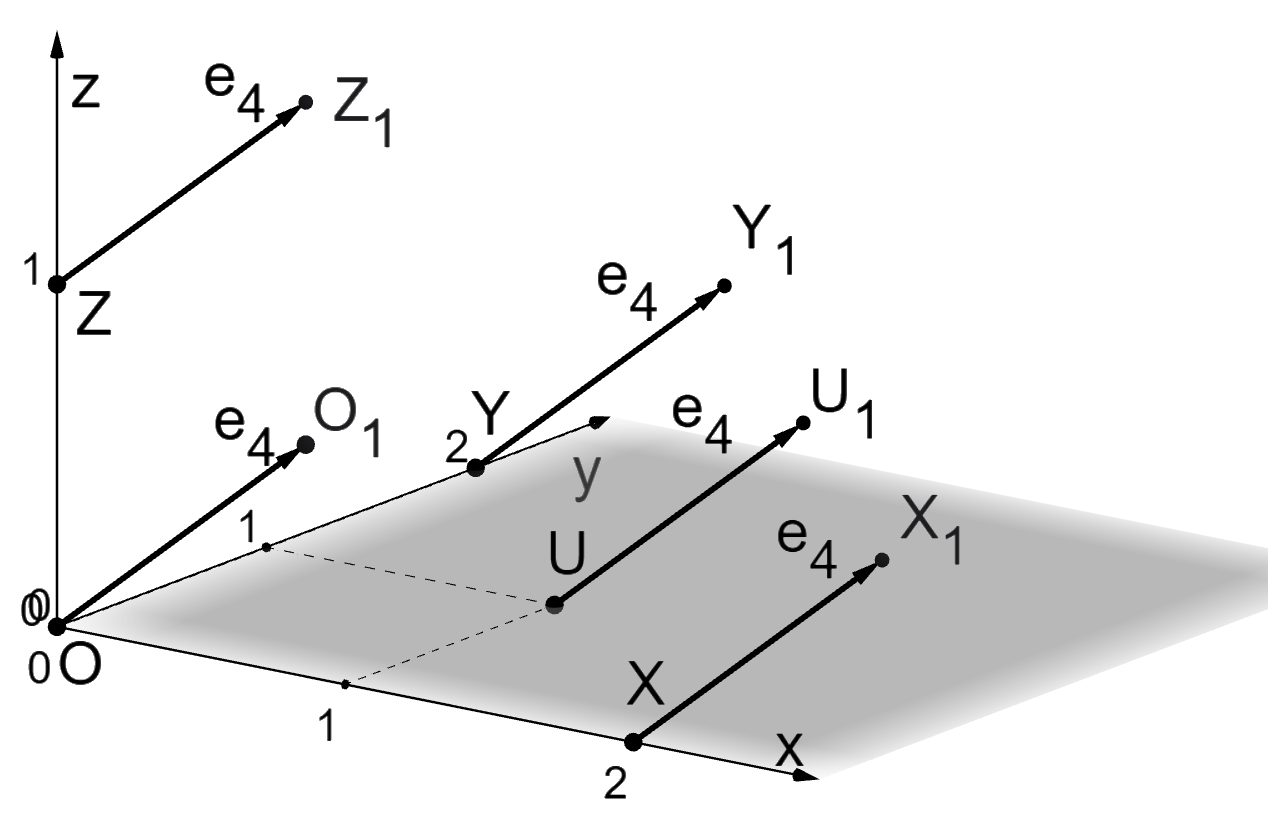}
		\end{center}
		\caption{
		\label{Voff=0_ex} Example of $\Voff_{\R^4_{\ge 0}}(\Ps) = 0$ that is not a stretched $B_k$-set.}
\end{figure}
By Proposition \ref{mv=0_prop}, we have: 
$$\dim (D_1 + D_2 + D_3) = 2 \quad \Rightarrow  \quad  \Voff_{\R^4_{\ge 0}}(\Ps) = 0.$$
%Note that the segment $[2e_1, 2e_2]$ in this example can be replaced with any segment in the $(e_1,e_2)$-coordinate plane. 

\end{ex}

\subsection{Algebraic degrees}\label{alg_deg_subsec}

The mixed volume of off-coordinate polytopes computes various algebraic degrees (maximum likelihood, Euclidean distance and polar degrees) in the Newton nondegenerate case. For background on these degrees, we refer to \cite{Huh13} (maximum likelihood), \cite{DHOST16} and \cite[Corollary 4.6]{Huh13} (Euclidean distance and polar degrees). A general introduction to algebraic degrees can be found in \cite{BKS}, and a concise treatment in our context is given in \cite[\S 5.5]{Sel25}.

The following remark adapts corresponding statements from \cite[Example 1.7(6)]{Es24} and \cite[Theorem 5.40]{Sel25}, as in these cases the formulas for the mixed volume of semi-interlaced polytopes coincide with those for the $e$-Newton number (cf. Remark \ref{0_cci_rem}).

\begin{Remark}

The mixed volume of off-coordinate polytopes computes algebraic degrees as follows:

\begin{enumerate}
    \item \textbf{Maximum likelihood degree (ML-degree)} \cite[Theorem 2.13]{LNRW23} and \cite[Example 5.44]{Sel25}.\\
    For Newton nondegenerate polynomials $f_1,\dots,f_m \in \C[z^{\pm 1}_1,\dots,z^{\pm 1}_{n-m}]$ with supports $\Ps_1,\dots,\Ps_m$ and generic $u\in \Z^{n-m}$, the ML-degree of the complete intersection $V_{f} = \{f_1 = {\dots} = f_m = 0\}$ is:
    $$\text{MLdeg} (V_{f},u) = \Voff_{\R^{m}_{\ge 0} \oplus \R^{n-m}}(\Ps),$$
    where $\Ps^u = \{u\}\ast \Ps_1\ast\dots\ast \Ps_m \subset \Z^{m}_{\ge 0} \oplus \Z^{n-m}$.
    
    \item \textbf{Euclidean distance degree (ED-degree)} \cite{BSW22}, \cite{TT24} and \cite[Example 5.47]{Sel25}.\\
    Denote by $\Ds^\rho$ the support of the quadratic distance function (containing the origin, all coordinate basis vectors, and their doubles).
    For Newton nondegenerate polynomials $f_1,\dots,f_m \in \C[z^{\pm 1}_1,\dots,z^{\pm 1}_{n-m}]$ all of whose supports $\Ps_1,\dots,\Ps_m$ contain the origin $O$, the ED-degree of the complete intersection $V_{f} = \{f_1 = {\dots} = f_m = 0\}$ is equal to:
    $$\text{EDdeg} (V_{f}) = \Voff_{\R^{m}_{\ge 0} \oplus \R^{n-m}} (\Ps^\rho),$$
    where $\Ps^\rho = \Ds^\rho\ast \Ps_1\ast\dots\ast \Ps_m \subset \Z^{m}_{\ge 0} \oplus \Z^{n-m}$. 
    
    \item \textbf{Polar degree (P-degree)} \cite[Proposition 5.51]{Sel25}.\\
    Denote by $\D^\circ \subset \R^{n+1}$ the $n$-dimensional lattice simplex $\{\sum_i x_i = 1, x_i\ge 0\}$ and by $\Ds^\circ \subset \Z_{\ge 0}^{n+1}$ the set of its vertices. Denote by $C_d\subset \R^{n+2}$ the $(n+1)$-dimensional cone containing $d\D^\circ \ast \D^\circ$ such that the simplices $\D^\circ$ and $d \D^\circ$ are its hyperplane sections. For a homogeneous degree $d$ polynomial $f$ with support $\Ps \subset d\D^\circ$ and Newton nondegenerate coefficients, the polar degree is:
    $$\text{Pdeg}(f) = \Voff_{C_d}(\Ps \ast \Ds^\circ).$$
\end{enumerate}
    
\end{Remark}

\end{document}